\newcommand{\maxw}{\overline{w}}
\newcommand{\maxc}{\overline{c}}
\newcommand{\maxd}{\overline{d}}
\newcommand{\boldi}{\mathbf{i}}
\newcommand{\boldj}{{\mathbf{j}}}
\newcommand{\hc}{\hat{c}}
\newcommand{\N}{\mathbb{N}}
\newcommand{\cU}{\mathcal{U}}
\newcommand{\U}{{\mathcal{U}}}
\newcommand{\Ug}{{\mathcal{U}}^\Gamma}
\newcommand{\Ud}{{\mathcal{U}}}
\newcommand{\R}{\mathbb{R}}
\newcommand{\X}{\mathcal{X}}
\renewcommand{\S}{\mathcal{S}}
\newcommand{\xx}{\mathbf{x}}
\newcommand{\x}[1]{\pmb{x}^{(#1)}}
\newcommand{\binvar}[1]{\{ 0,1 \}^{#1}}
\renewcommand{\algorithmicrequire}{\textbf{Input:}}
\renewcommand{\algorithmicensure}{\textbf{Output:}}
\newcommand{\NPhard}{\mathcal{NP}\mbox{-}\text{hard}}
\newcommand{\NP}{\mathcal{NP}}
\renewcommand{\P}{\mathcal{P}}
\renewcommand{\S}{\mathcal{S}}
\DeclareMathOperator*{\argmin}{arg\,min} 
\DeclareMathOperator*{\argmax}{arg\,max}
\newtheorem{theorem}{Theorem}
\newtheorem{corollary}[theorem]{Corollary}
\newtheorem{observation}[theorem]{Observation}
\newcommand{\p}{y}
\newcommand{\px}{w}
\begin{document}

\begin{frontmatter}

\title{Min-Max-Min Robustness for Combinatorial Problems with Discrete Budgeted Uncertainty}

\author{Marc Goerigk}
\address{Network and Data Science Management, University of Siegen, Germany}
\ead{marc.goerigk@uni-siegen.de}

\author{Jannis Kurtz}
\address{Department of Mathematics, RWTH Aachen University, Germany}
\ead{kurtz@mathc.rwth-aachen.de}

\author{Michael Poss}
\address{LIRMM, University of Montpellier, CNRS, France}
\ead{michael.poss@lirmm.fr}

%% \tnotetext[label1]{}
%% \author{Name\corref{cor1}\fnref{label2}}
%% \ead{email address}
%% \ead[url]{home page}
%% \fntext[label2]{}
%% \cortext[cor1]{}
%% \address{Address\fnref{label3}}
%% \fntext[label3]{}

\begin{abstract}
We consider robust combinatorial optimization problems with cost uncertainty where the decision maker can prepare $K$ solutions beforehand
and chooses the best of them once the true cost is revealed. Also known as min-max-min robustness (a special case of $K$-adaptability), it is a viable alternative to otherwise intractable two-stage problems. The uncertainty set assumed in this paper considers that in any scenario, at most $\Gamma$ of the components of the cost vectors will be higher than expected, which corresponds to the extreme points of the budgeted uncertainty set.

While the classical min-max problem with budgeted uncertainty is essentially as easy as the underlying deterministic problem, it turns out that the min-max-min problem is $\NPhard$ for many easy combinatorial optimization problems, and not approximable in general. We thus present an integer programming formulation for solving the problem through a row-and-column generation algorithm. While exact, this algorithm can only cope with small problems, so we present two additional heuristics leveraging the structure of budgeted uncertainty. We compare our row-and-column generation algorithm and our heuristics on knapsack and shortest path instances previously used in the scientific literature and find that the heuristics obtain good quality solutions in short computational times.
\end{abstract}

\begin{keyword}
robust combinatorial optimization \sep
min-max-min robustness \sep
$K$-adaptability \sep 
budgeted uncertainty.
% \sep 
% Diving heuristic \sep
% Complexity.
\end{keyword}

\end{frontmatter}

\baselineskip 20pt plus .3pt minus .1pt

\section{Introduction}

Let us consider a combinatorial optimization problem 
\begin{equation}\label{eq:min}
  \min_{x\in \X} \pmb{c}^\top  \pmb{x}, \tag{M$^1$}
\end{equation}
where $\X\subseteq \binvar{n}$ is the set of feasible solutions and $\pmb{c}\in\R^n$ is a cost vector. In many practical applications (e.g. uncertain road lengths for a shortest path problem or uncertain revenues in a project investment problem), the decisions $\pmb{x}$ must be taken prior to knowing the exact values of the cost vector $\pmb{c}$. In that context, one should account for these uncertainties when looking for a solution. One widely used approach is robust optimization, in which it is assumed that $\pmb{c}$ can take any value in a given uncertainty set $\cU\subseteq\R^n$, leading to the robust counterpart
\begin{equation}\label{eq:minmax}
  \min_{\pmb{x}\in \X}\max_{\pmb{c}\in \cU} \pmb{c}^\top  \pmb{x}. \tag{M$^2$}
\end{equation}
The min-max problem \eqref{eq:minmax} has been widely studied for discrete uncertainty sets $\Ud=\left\{ \pmb{c}^1, \ldots ,\pmb{c}^m\right\}$ (see the surveys \cite{aissi_minmax_survey,kasperski2016robust}). For most classical combinatorial problems the min-max problem \eqref{eq:minmax} is $\NPhard$ for general discrete uncertainty sets $\Ud$, even if $\Ud$ only contains two scenarios. What is more, discrete uncertainty sets rely on accurate historical data, which are often not available. When this is the case, it can be more natural for the decision maker to provide only two values $\{\hc_i, \hc_i+d_i\}$ with $d_i\geq 0$
for each uncertain cost component, representing the expected and worst case, respectively. Then, assuming that only $\Gamma$ uncertain parameters take their upper values in any scenario, Bertsimas and Sim~\cite{bertsimas2003combinatorial} introduced what is often called the discrete budgeted uncertainty set
\begin{equation*}
% \label{eq:U}
 \Ug = \left\{ \pmb{c}\in\R^n : c_i=\hc_i + \delta_i d_i, i\in [n],\; \pmb{\delta}\in\{0,1\}^n,\; \sum\limits_{i\in[n]} \delta_i \le \Gamma \right\}
\end{equation*}
where we use the notation $[n]:=\{1,\ldots,n\}$. The implicit description of $\Ug$ leads to min-max problems that are essentially as easy as the underlying deterministic problems (see \cite{bertsimas2003combinatorial}).

In some real-world situations, the min-max approach~\eqref{eq:minmax} can be too conservative, as no recourse action can be taken to remedy to the values taken by the uncertain parameters. A more flexible model has been introduced in~\cite{buchheim2016min} to overcome this limitation. The approach computes $K$ solutions and chooses the best of them for each realization of $\pmb{c}$ in $\cU$. Using discrete budgeted uncertainty in this setting leads to the min-max-min problem
 \begin{equation}
 \label{eq:minmaxmin}
 \min_{\x{1}, \ldots, \x{K} \in \X} \max_{\pmb{c}\in \Ug} \min_{k\in[K]} \pmb{c}^\top \x{k}. \tag{M$^3$}
 \end{equation}
 Approach~\eqref{eq:minmaxmin} is particularly well-suited in situations where one can prepare the ground before the uncertainty is revealed. It is a special case of the more general $K$-adaptability approach from~\cite{hanasusanto2015k}, where additional first-stage costs are allowed. Examples are numerous, e.g. transporting relief supplies or evacuating citizens in case an uncertain disaster arises~\cite{chang2007scenario,liberatore2013uncertainty}, hub locations~\cite{alumur2012hub}, or parcel deliveries~\cite{eufinger2017robust,subramanyam2017k}; see also~\cite{ChasseinGKP19} for more details.
 
It has been shown in~\cite{buchheim2016min} that Problem~\eqref{eq:minmaxmin} is as easy as the underlying problem \eqref{eq:min} if $\cU$ is a convex uncertainty set and $K\ge n+1$. For discrete uncertainty however, the min-max-min problem is at least as complex as the min-max problem which is, more often than not, $\NPhard$~\cite{JannisDiscrete}. Regarding the budgeted uncertainty, its convex hull
\begin{equation}
\label{eq:convU}
 \left\{ \pmb{c}\in\R^n : c_i\in[\hc_i, \hc_i+d_i], i\in [n],\; \sum\limits_{i\in[n]} \frac{c_i-\hc_i}{d_i} \le \Gamma \right\}
\end{equation}
has been considered in~\cite{Chassein2017} and~\cite{ChasseinGKP19}, where the authors study the theoretical complexity of the problem, and propose efficient solution algorithms, respectively.

The focus of this paper is Problem~\eqref{eq:minmaxmin} under the discrete budgeted uncertainty set $\Ug$. First, we remark that, unlike the min-max problem, it is not equivalent to replace $\Ug$ by its convex hull \eqref{eq:convU}, see for instance~\cite{JannisDiscrete} for an example, so that the results for convex uncertainty sets proved in~\cite{buchheim2016min,Chassein2017} do not carry over the problem studied in this paper. Moreover the discrete version of the budgeted uncertainty set is inevitable if we want to model edge failures for problems on graphs or adversarial attacks on labeled data in classification problems, see \cite{bertsimas2018robust} for the latter application.

We prove in Section~\ref{sec:negativeresults} that, unlike the classical min-max problem, Problem~\eqref{eq:minmaxmin} for discrete budgeted uncertainty is $\NPhard$ for the shortest path problem, the spanning tree problem, the assignment problem, the knapsack problem, the selection problem and even for the unconstrained minimization problem. We also show that, in general, Problem~\eqref{eq:minmaxmin} cannot be approximated in polynomial time. On the positive side, we provide dynamic programming algorithms for the unconstrained and the knapsack problem, which are essentially of theoretical interest. Section~\ref{sec:ilp} turns to integer linear programming (ILP) formulations. We first show that computing the inner maximization problem 
\[\max_{\pmb{c}\in \Ug} \min_{k\in[K]} \pmb{c}^\top \x{k}\]
is $\NPhard$, suggesting that no compact  ILP may be available for~\eqref{eq:minmaxmin}. We then introduce an assignment-based formulation, that is embedded into a row-and-column generation algorithm. We pursue numerical approaches for the problem in Section~\ref{sec:heur} where we provide two heuristic algorithms. Leveraging known results for budgeted uncertainty, we can prove both algorithms solve only polynomially many deterministic problems. Our ILP formulation and the heuristics are numerically assessed in Section~\ref{sec:xp} on instances of knapsack and shortest path problems previously considered in the literature. Section~\ref{sec:conclusion} concludes the paper.

\section{Complexity results}
\label{sec:negativeresults}

In this section, we provide complexity results for the min-max-min problem \eqref{eq:minmaxmin} for 
the unconstrained binary problem (where $\X = \{0,1\}^n$),
the selection problem (where $\X = \{ \pmb{x}\in\{0,1\}^n : \sum_{i\in[n]} x_i = p\}$ for an integer $p<n$),
the knapsack problem,
the spanning tree problem, the assignment problem, 
and the shortest path problem. Note that for the knapsack problem, we consider a max-min-max problem instead of~\eqref{eq:minmaxmin}.
It turns out that all of the mentioned problems are at least weakly $\NPhard$, and under some configurations even strongly $\NPhard$. Table \ref{tbl:compl_min-max-min_discr} summarizes the results which we prove in the following. While the unconstrained binary problem and the selection problem are trivial problems in the nominal case, they are often considered in the robust optimization literature (see, e.g., \cite{buchheimrobust,chassein2018recoverable}). We further show that the shortest path problem becomes inapproximable, and provide a dynamic programming algorithm for the knapsack problem.

%\begin{table}[htb]
\begin{table}[htb]
\centering
% \resizebox{0.6\textwidth}{!}
% {
\begin{tabular}{l|ll|l}
Problem   & $K$ fixed & $K$ input & Reference\\ 
\hline 
Unconstrained & weakly $\NP$-hard & strongly $\NP$-hard & Theorem~\ref{thm:uncons}\\
Selection & weakly $\NP$-hard & strongly $\NP$-hard & Theorem~\ref{thm:selection}\\
Knapsack & weakly $\NP$-hard & strongly $\NP$-hard & Theorem~\ref{thm:hard-others}\\
Spanning Tree & $\NP$-hard & strongly $\NP$-hard & Theorem~\ref{thm:hard-others}\\
Assignment & $\NP$-hard & strongly $\NP$-hard & Theorem~\ref{thm:hard-others}\\
Shortest Path & strongly $\NP$-hard & strongly $\NP$-hard & Theorem~\ref{thm:shortestpath}\\
\end{tabular}
% }
\caption{Complexity of Problem \eqref{eq:minmaxmin} for $\Ug$.}\label{tbl:compl_min-max-min_discr}
\end{table} 
% We present proofs of hardness in Section~\ref{sec:nphard}, and a pseudopolynomial algorithm in Section~\ref{sec:dp}.

\subsection{$\NP$-hard cases and inapproximability}\label{sec:nphard}

In the following we prove $\NP$-hardness results for Problem \eqref{eq:minmaxmin} for several combinatorial problems.

% unconstrained
\begin{theorem}
\label{thm:uncons}
Problem \eqref{eq:minmaxmin} for the unconstrained binary problem is weakly $\NPhard$, 
% in the ordinary sense, 
even if $K=2$. It is strongly $\NPhard$ 
% in the strong sense 
when $K$ is part of the input.
\end{theorem}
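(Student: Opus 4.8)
The plan is to prove weak $\NPhard$ness for fixed $K=2$ by a reduction from \textsc{Partition}, and strong $\NPhard$ness for variable $K$ by a reduction from \textsc{3-Partition} (or directly from a strongly $\NPhard$ variant). The key insight to exploit is that, for the unconstrained problem, a collection of solutions $\x{1},\dots,\x{K}$ together with the adversary's choice of which $\Gamma$ coordinates to raise induces, for each coordinate $i$, whether \emph{some} selected solution has $x_i=0$ (so the adversary cannot hurt us on $i$ via that solution). So the structure we control is essentially a family of subsets of $[n]$, and the adversary picks $\Gamma$ coordinates to maximize the minimum over the family of the incurred cost. This is a covering-type combinatorial game, which is where the hardness comes from.

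For the $K=2$ case, I would set $\Gamma$ to a value forcing a delicate balance and design the instance so that the two solutions $\x{1},\x{2}$ must be (essentially) complementary characteristic vectors of a \textsc{Partition} of the item set: pick $\hc_i=0$ and $d_i=a_i$ where $a_1,\dots,a_n$ is the \textsc{Partition} instance with $\sum_i a_i = 2S$, and set $\Gamma=n$ so the adversary raises \emph{all} coordinates. Then $\max_{c}\min_{k}c^\top\x{k} = \min_k \sum_{i:x^{(k)}_i=1} a_i$, and with two solutions the best we can do is to split the ground set as evenly as possible; the optimum equals $S$ iff a balanced partition exists. (One must also force both $\x{k}$ to be $0/1$ vectors covering a partition — with $\hc=0$ there is no incentive to include extra items, and including fewer hurts the other solution; a small uniform $\hc_i$ or an auxiliary bounding term can pin this down cleanly.) The routine part is checking both directions of the equivalence and that all numbers are polynomially bounded in the \textsc{Partition} instance.

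For the variable-$K$ case, I would use \textsc{3-Partition}: given $3m$ numbers summing to $mB$ with each in $(B/4,B/2)$, build an instance with $n=3m$ coordinates, $\hc_i=0$, $d_i=a_i$, $\Gamma = 3m - 3$ (the adversary leaves exactly $3$ coordinates untouched), and $K=m$. Then each selected solution $\x{k}$ is "safe" only on the (at most $3$, by the size restriction) coordinates it sets to $0$; the adversary will raise a set of $3m-3$ coordinates, and we are protected on coordinate $i$ only if \emph{every} solution among the $K$ that the min could pick is... — more precisely, $\max_c\min_k c^\top\x{k}$ is small exactly when the supports can be arranged so that for the adversary's worst choice some $\x{k}$ avoids enough weight, which happens iff the complements of the supports form a valid \textsc{3-Partition} into triples each summing to $B$. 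Tuning the threshold so the min-max-min value is at most $mB - B$ iff a \textsc{3-Partition} exists gives strong $\NPhard$ness.

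The main obstacle I anticipate is the second, combinatorial direction in each reduction: showing that an optimal family of solutions can be assumed to consist of characteristic vectors of a partition (no "wasted" or overlapping supports), i.e. that the min-max-min adversarial game genuinely forces a clean partition structure rather than some cleverer fractional-looking hedge across $K$ solutions. Handling this may require either a careful exchange/normalization argument on an optimal solution or a slight gadget (e.g. replicating coordinates, or adding a tiny baseline cost $\hc_i=\eps$ so that enlarging a support is strictly penalized through the other solutions) to make the correspondence exact; getting that gadget to preserve polynomial encoding size while not introducing spurious optima is the delicate point.
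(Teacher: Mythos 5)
There is a genuine gap: your reductions are built on nonnegative cost vectors ($\hc_i=0$, $d_i=a_i\ge 0$) over the unconstrained feasible set $\X=\{0,1\}^n$, but then the all-zero vector is feasible and has cost $0$ in every scenario, so taking $\x{1}=\pmb{0}$ makes the min-max-min value of \emph{every} such instance equal to $0$, independent of the \textsc{Partition} or \textsc{3-Partition} data. The auxiliary fix you suggest (a small uniform $\hc_i=\eps>0$) makes this worse, not better: positive nominal costs only strengthen the incentive to leave items out. Your intuition that ``including fewer hurts the other solution'' is backwards for a minimization objective, because the inner $\min_k$ always selects the cheapest prepared solution, so an empty solution can never be hurt. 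A second, independent problem is the choice $\Gamma=n$: then the adversary's maximization is attained by raising every coordinate (for $\pmb{x}\ge 0$), the uncertainty set effectively collapses to the single scenario $\hc+\pmb{d}$, and Problem~\eqref{eq:minmaxmin} degenerates to the deterministic problem $\min_{\pmb{x}\in\X}(\hc+\pmb{d})^\top\pmb{x}$, which is polynomial for the unconstrained problem. So no hardness can come from that regime; the ``delicate balancing'' you hope for (value equals the heavier side of a partition) would require the adversary to selectively attack one of the two solutions, which a full budget cannot do.

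The paper's construction repairs exactly these two points: it uses \emph{negative} nominal costs $\hc_i=-a_i$ (so leaving items out is costly and the empty solution is far from optimal) together with a \emph{small} budget $\Gamma=1$ (resp.\ $\Gamma=K-1$) and huge deviations $d_i=M=\sum_i a_i$. Then placing an item in both solutions is ruinous, so optimal solutions satisfy $\x{1}_i\neq\x{2}_i$ for all $i$, and the adversary spends its single deviation on the solution with the more negative nominal cost, so the objective equals the larger of the two nominal costs; this is $-\tfrac12 M$ exactly when a balanced partition exists. The same scheme with $\Gamma=K-1$ reduces from partitioning into $K$ equal-sum sets (generalizing \textsc{3-Partition}) for strong $\NPhard$ness. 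Your proposal's structural idea (nonnegative costs forcing a partition of supports) is essentially the one the paper uses for the \emph{selection} problem, where the cardinality constraint $\sum_i x_i=p$ rules out the empty solution; for the unconstrained problem it cannot work without negative costs. Finally, note that the ``weakly'' part of the claim also needs the pseudo-polynomial dynamic program of Section~\ref{sec:dp}, which your proposal does not address.
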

\begin{proof}
First assume that $\Gamma=1$ and $K=2$ and consider a set of integers $a_i\in\N$, $i\in [n]$. The partition problem, i.e. deciding whether there exists a subset $S\subseteq [n]$ such that $\sum_{i\in S}a_i=\sum_{i\in [n] \backslash S}a_i$, is known to be $\NPhard$. Next, consider an instance of problem~\eqref{eq:minmaxmin}
% $$
%  \min\limits_{\x{1}, \ldots, \x{K} \in \{0,1\}^n} \max\limits_{c\in \Ug} \ \min\limits_{k\in[K]} \ c^\top \x{k}
% $$
where $\hat{c}_i=-a_i$ and $d_i=M$ for each $i\in [n]$ and $M=\sum_{i\in [n]}a_i$. By the choice of $M$ in each optimal solution it holds $\x{1}_i\neq \x{2}_i$ for all $i\in[n]$ and the large deviation $M$ will affect the solution with the smaller nominal costs. The optimal value is therefore larger or equal to $-\frac{1}{2}M$. Then, the partition instance is a yes instance if and only if the min-max-min problem has an optimal value equal to $-\frac{1}{2}M$. 

Since the unconstrained binary problem is a special case of the knapsack problem the pseudo-polynomial algorithm presented in Section \ref{sec:dp} can be applied to \eqref{eq:minmaxmin} for the unconstrained problem, which proves the weak $\NPhard$ness.

The proof above extends to the case when $K$ is part of the input by setting $\Gamma=K-1$. A similar reasoning shows that one can decide if a partition into $K$ sets  having the same costs exists if and only if the corresponding min-max-min problem has an optimal value equal to $-\frac{1}{K}M$. 
% The former problem is $\NPhard$ as it corresponds to the decision version of the scheduling problem $Pm||C_{max}$. If $k$ is part of the input and $\Gamma=k-1$, 
The problem generalizes the 3-partition problem (see below), which is $\NPhard$ in the strong sense, which proves the result.
\end{proof}

% SEL
\begin{theorem}
\label{thm:selection}
The problem \eqref{eq:minmaxmin} is weakly $\NP$-hard for the selection problem, even if $K=2$. It is strongly $\NP$-hard if $K$ is part of the input.
\end{theorem}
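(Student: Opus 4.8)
The plan is to mimic the reduction used in Theorem~\ref{thm:uncons}, adapting the partition (resp.\ $3$-partition) instance so that it fits within the cardinality constraint $\sum_{i} x_i = p$ of the selection problem. First I would start from the same ingredients: a partition instance with integers $a_i$, $i\in[n]$, and set $\hat c_i = -a_i$, $d_i = M$ with $M = \sum_{i\in[n]} a_i$, and $\Gamma = 1$, $K=2$. The obstacle compared to the unconstrained case is that selection forces exactly $p$ of the coordinates to be chosen in each of $\x{1}$ and $\x{2}$, so we cannot simply demand $\x{1}_i \neq \x{2}_i$ on all coordinates. The fix I would use is to pad the instance with $n$ additional ``dummy'' items of zero nominal cost and zero deviation (or more precisely, items designed so that they are cost-neutral and can be freely used to top up either solution to the required cardinality), and set $p = n$. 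Then any two selections of size $n$ over the $2n$ items can be taken to be complementary on the $n$ original items (choosing the complement on the originals and padding with dummies), so the same argument as before applies: the large deviation $M$ hits whichever of the two solutions has smaller nominal (i.e.\ more negative) cost on the original items, the optimal value is at least $-\tfrac12 M$, and equality holds iff a balanced partition exists.

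The key steps, in order, would be: (i) describe the padded ground set and the choice $p=n$, $\Gamma=1$, $\hat c$, $d$; (ii) argue that in an optimal pair $(\x{1},\x{2})$ we may assume the two solutions are complementary on the original $n$ items, using the fact that $d_i = M$ is so large that any coordinate where both solutions agree and take value $1$ can be made worst-case simultaneously, which is never beneficial, while dummies let us restore feasibility; (iii) conclude the lower bound $-\tfrac12 M$ on the optimal value by noting the adversary picks the single index (allowed since $\Gamma=1$) in the solution with smaller nominal cost; (iv) show the bound is attained exactly when the $a_i$ admit an equal-sum partition. For the strongly $\NPhard$ part with $K$ input, I would set $\Gamma = K-1$ and reduce from $3$-partition, padding analogously so that the $K$ solutions can be taken pairwise complementary (i.e.\ each original item is used by exactly one of the $K$ solutions), giving optimal value $-\tfrac1K M$ iff a partition into $K$ equal-sum sets exists, and noting $3$-partition is strongly $\NPhard$.

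The main obstacle I anticipate is step~(ii): making rigorous the claim that complementary solutions are without loss of optimality under the cardinality constraint. In the unconstrained proof this was essentially automatic from the choice of $M$; here one must check that the dummy items genuinely allow both the cardinality to be met and the ``no shared $1$-coordinate among original items'' structure to be enforced, and that introducing a shared original item (forced, say, if $p$ were chosen too large) cannot help the minimizer. Getting the padding exactly right — enough dummies, correct value of $p$, dummies truly neutral with respect to the adversary's budget $\Gamma$ — is the delicate part; everything else is a direct transcription of the Theorem~\ref{thm:uncons} argument.
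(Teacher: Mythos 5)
Your reduction is correct, but it takes a different route from the paper. You transplant the construction of Theorem~\ref{thm:uncons} verbatim (negative nominal costs $-a_i$, deviation $M$, $\Gamma=1$) and then repair feasibility under the cardinality constraint by adding $n$ zero-cost, zero-deviation dummy items and setting $p=n$; this works, since sharing an original item lets the adversary push both solutions to cost at least $0>-\tfrac12 M$, dummies are adversary-neutral, and the value $-\tfrac12 M$ is attained exactly when an equal-sum split of the $a_i$ exists — the delicate padding step you worry about does go through, provided you state explicitly how many dummies are added and that dummies may be shared between the two solutions. The paper avoids padding altogether: for $K=2$ it reduces from the \emph{equipartition} variant of partition (both parts of equal cardinality, still weakly $\NPhard$) and simply sets $p=n/2$ on the original $n$ items with nonnegative costs; for $K$ part of the input it reduces from $3$-partition directly with $p=3$, $K=m$, $\Gamma=K-1$, exploiting that the condition $\tfrac{B}{4}<c_i<\tfrac{B}{2}$ forces every equal-sum class to have exactly three elements, so no dummies or value $-\tfrac1K M$ bookkeeping are needed (the yes-instances correspond to optimal value at most $B$). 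Your approach buys a uniform template reusable across problems (it is essentially the unconstrained proof plus padding), at the price of having to argue the ``complementary on originals'' normalization and the dummy accounting for each $K$; the paper's choice of source problems makes the cardinality constraint match the reduction for free. One genuine omission: the theorem claims \emph{weak} $\NP$-hardness for fixed $K$, and your proposal only gives the hardness reduction; the paper additionally notes that the pseudo-polynomial dynamic program of Section~\ref{sec:dp} adapts to the selection problem (enforcing the capacity constraint at equality), which is what justifies ``weakly'' rather than merely ``$\NP$-hard'', so you should add that remark.
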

\begin{proof}

% For the case that $k$ is fixed we consider the equipartition problem. For given $k\in\N$ and values $c_1,\ldots ,c_{m}\in\N$ we have to decide if there exist subsets $A_1,\ldots ,A_k\subseteq\left\{ 1,\ldots ,m\right\}$ such that $\sum_{i\in A_j}c_\ell = \frac{1}{k}\sum_{\ell=1}^{m} c_\ell$ for each $j=1,\ldots ,k$ and such that $|A_1|=\ldots = |A_k|$. The latter problem is $\NP$-hard for each $k\in \N$. For the given instance of the partition problem for $k\in\N$ we define an instance of the min-max-min selection problem with $k$ solutions in dimension $m$ by defining $\Gamma=k-1$ and $p=\frac{m}{k}$. If we define $\Ud$ as above the partition problem is a yes instance if and only if \eqref{eq:minmaxmin} has an optimal value of $\frac{1}{k}\left(\sum_{\ell=1}^{m}c_\ell\right)$.
For $K=2$ we can use a similar construction as in the proof of Theorem~\ref{thm:uncons} with $p=n/2$, as the partition problems remains (weakly) $\NPhard$ if both partitions are required to have equal size. Note that negative cost vectors are not required in this case. Moreover, one can adapt the pseudo-polynomial algorithm presented in Section \ref{sec:dp} to the selection problem which proves the weakly $\NPhard$ness.

To prove strong $\NP$-hardness for the case that $K$ is part of the input we reduce the $3$-partition problem to Problem \eqref{eq:minmaxmin}. The $3$-partition problem is defined as follows: For given values $c_1,\ldots ,c_{3m}\in\N$ and a bound $B\in\N$ such that $\frac{B}{4} < c_i < \frac{B}{2}$ for all $i=1,\ldots ,3m$ and $\sum_{i=1}^{3m} c_i = mB$ we have to decide if there exist subsets $A_1,\ldots ,A_m\subseteq\left\{ 1,\ldots ,3m\right\}$ such that $\sum_{i\in A_j} c_i = B$ for all $j=1\ldots ,m$. Note that each set $A_j$ must contain exactly $3$ elements. The $3$-partition problem is known to be strongly $\NP$-hard~\cite{gareyjohnson}.

We assume we have a given instance of the $3$-partition problem as above. We now define an instance of the min-max-min selection problem in dimension $3m$ where $p=3$, $K=m$, $\Gamma = K-1$ and we define $\Ug$ with $\hat c_i = c_i$ and $d_i = M$ where $M = \sum_{i=1}^{3m} c_i$. By the definition of $M$ and $\Gamma$ the $K$ solutions define a partition of the ground set $\left\{ 1,\ldots ,3m\right\}$ in an optimal solution of~\eqref{eq:minmaxmin}, and each solution uses exactly $3$ elements. The given $3$-partition is then a yes instance if and only if Problem \eqref{eq:minmaxmin} has an optimal value lower or equal to $B$.

\end{proof}

% MST, ASS, KNAP
\begin{theorem}\label{thm:hard-others}
Even if $K=2$, Problem \eqref{eq:minmaxmin} for the spanning tree problem and the assignment problem is $\NPhard$; for the knapsack problem it is weakly $\NPhard$.
It is strongly $\NPhard$ for all the mentioned problems if
% in the strong sense when 
$K$ is part of the input.
\end{theorem}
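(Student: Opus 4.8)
\emph{Proof plan.} The plan is to replay the reductions behind Theorems~\ref{thm:uncons} and~\ref{thm:selection}: for $K=2$ reduce from partition, and for $K$ part of the input from $3$-partition. In every case I designate a fixed set of ``item components'' carrying nominal costs equal to the numbers of the (3-)partition instance, give \emph{every} component (the item ones and the auxiliary ones) a huge deviation $M$, and take $\Gamma=K-1$. For the graph problems (spanning tree, assignment), a ``collision'' argument shows that any configuration of the $K$ solutions that is not edge-disjoint lets the adversary reach all $K$ solutions with only $K-1$ inflations and drive the robust value up to at least $M$; hence an optimal configuration is edge-disjoint, so the item components get split among the $K$ solutions according to a partition into groups with sums $C_1,\dots,C_K$ (whose total is the total of the numbers, $mB$ resp.\ $\sum_i a_i$), and every such partition is realizable. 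Since $\Gamma=K-1<K$, the adversary leaves at least one solution untouched; because the deviation $M$ also sits on the zero-cost filler components, every solution can be inflated past $M$, so the adversary's best reply to an edge-disjoint configuration is to spend its $K-1$ inflations putting one on each of the other $K-1$ solutions, which yields robust value exactly $\max_k C_k$. Thus Problem~\eqref{eq:minmaxmin} (resp.\ its max-min-max analogue) equals $\min$ over partitions of $\max_k C_k$, which is $B$ (resp.\ $\frac{1}{2}\sum_i a_i$) if and only if the (3-)partition instance is a yes-instance --- using, in the $3$-partition case, that $B/4<c_i<B/2$ forces every group of sum $B$ to have exactly three elements.

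It then remains to realize this scheme with the required combinatorial structures. For the spanning tree problem I would use a path $v_0,\dots,v_{3m}$ in which slot $s$ is a bundle of $K$ parallel edges between $v_{s-1}$ and $v_s$: one ``item edge'' of nominal cost $c_s$ and $K-1$ ``filler edges'' of nominal cost $0$, all with deviation $M$. A spanning tree picks exactly one edge per slot, and $K$ edge-disjoint spanning trees use up each bundle, so the item edge of slot $s$ ends up in one of the $K$ trees, with every assignment realizable (for $K=2$ one uses the partition instance directly: $n$ slots, $K=2$, $\Gamma=1$). For the assignment problem I would replace each bundle of $K$ parallel edges by a vertex-disjoint copy of $K_{K,K}$, putting cost $c_s$ on one of its edges and $0$ on the rest (deviation $M$ throughout); a $1$-factorization of each copy distributes the item edges among the $K$ perfect matchings exactly as above, while the remaining edges of $K_{n,n}$ receive a prohibitive cost so that they never appear in an optimal configuration. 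For the knapsack problem, treated as a max-min-max problem, the case $K=2$ already contains the deterministic knapsack problem (take $\Gamma=0$), hence is weakly $\NPhard$; for $K$ part of the input I would set $w_i=\hat c_i=c_i$, $d_i=c_i$, capacity $W=B$, $K=m$ and $\Gamma=K-1$, so that the feasible solutions are the subsets of weight at most $B$, every solution has profit at most $B$, and $K$ pairwise-disjoint profit-$B$ solutions correspond precisely to a partition of $[3m]$ into $m$ triples of sum $B$; one then checks that the value is $B$ if and only if no $K-1$ items meet all profit-$B$ solutions, which forces them to be pairwise disjoint.

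The step I expect to be the real obstacle is justifying this ``forced structure'': that the minimizing player gains nothing from a configuration that is not edge-disjoint or is degenerate (some solution of negligible cost). The first is handled by the collision argument above; the second is precisely the reason for loading the deviation $M$ onto the zero-cost filler edges, so that a cheap solution becomes worthless to the minimizer as soon as the adversary touches it. Picking the constants (how large $M$, and the prohibitive cost for the assignment gadget, must be relative to $B$ and $m$) is routine; the one extra point needing a short argument is that forbidden edges of $K_{n,n}$ never occur in an optimal assignment configuration --- using one leaves at most $K-1$ effectively active solutions, which with $\Gamma=K-1$ the adversary can then reach all of.
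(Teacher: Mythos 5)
Your proposal is correct and follows the same master plan as the paper's proof: reduce from partition for $K=2$ and from $3$-partition when $K$ is part of the input, set $\Gamma=K-1$, put a deviation $M$ equal to the total sum on every component so that any non-disjoint configuration can be fully covered by the adversary with $K-1$ inflations (the shared component plus one per remaining solution), forcing component-disjointness, whence the robust value is $\max_k C_k$ over the induced partition. The differences are in the gadgets, and they are legitimate. For the spanning tree the paper stacks $K$ paths joined by zero-cost rungs (a simple graph), while you use bundles of $K$ parallel edges; this needs either multigraphs to be admitted or a routine subdivision fix, a detail you should state. Incidentally, your placement of the item cost on a single edge per bundle with zero-cost fillers is what makes the threshold argument work: with cost $a_i$ on \emph{both} rails, as in the paper's appendix, every spanning tree has nominal cost $\sum_i a_i$ and the stated threshold $\tfrac12 M$ is never attained, so your variant is the sound one. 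For the assignment problem the paper rewards coverage of the diagonal of $K_{n,n}$ via negative costs $-a_i$, while you force coverage through vertex-disjoint $K_{K,K}$ blocks, a $1$-factorization argument, and prohibitive off-block costs, together with the (correctly identified) argument that prohibitive edges never appear below the threshold; both routes work, yours avoiding negative costs. Your direct $3$-partition reduction for knapsack with $K$ part of the input is sound and keeps numbers polynomially bounded, hence gives strong $\NPhard$ness. The one weak spot is knapsack with $K=2$: taking $\Gamma=0$ proves the literal claim only by collapsing to deterministic knapsack; the paper instead embeds the robust unconstrained problem of Theorem~\ref{thm:uncons} by making all vectors feasible, which shows hardness stems from the robustness itself and is the preferable citation.
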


The proof of Theorem~\ref{thm:hard-others} follows the line of the proof of Theorem~\ref{thm:uncons} and is provided in~\ref{app:proofs}. The result for the knapsack problem directly follows from Theorem~\ref{thm:uncons} by choosing knapsack weights and a knapsack capacity such that all vectors in $\binvar{n}$ are feasible.

% SP
\begin{theorem}
\label{thm:shortestpath}
Problem \eqref{eq:minmaxmin} for the shortest path problem is strongly $\NPhard$, 
even if $K=2$.
% for each fixed value of $k$.
\end{theorem}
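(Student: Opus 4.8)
\emph{Proof proposal.} Since the result must hold for the fixed value $K=2$ and in the strong sense, the reduction cannot be built on a number problem such as the partition problem; instead, I would reduce from $3$-SAT (or a similar strongly $\NP$-complete problem carrying no numeric data), exploiting the fact that an $s$--$t$ path in a layered digraph can encode a truth assignment and that the graph topology can tie clause satisfaction to the cost the adversary is able to inflict.

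Given a $3$-CNF formula $\varphi$ on variables $z_1,\dots,z_n$ with clauses $C_1,\dots,C_m$, the construction I have in mind uses a digraph $G$ made of two ``tracks'' that share only $s$ and $t$, so that an optimal pair $(\x{1},\x{2})$ may be assumed to use one track each and is therefore arc-disjoint; the adversary must then split its budget $\Gamma$ between the two tracks. Each track is a series concatenation of $n$ ``variable diamonds'' (diamond $j$ has a ``$z_j$ true'' arc and a ``$z_j$ false'' arc, so that a subpath encodes an assignment) followed by $m$ ``clause gadgets'' (gadget $i$ contains one internal route per literal of $C_i$ and no route the adversary cannot reach). I would fix deviations $d_e\in\{0,1\}$ together with polynomially bounded nominal costs $\hc_e$ and a threshold $T$ so that: a path that selects a consistent assignment and routes every clause gadget through a literal which this assignment makes true has small nominal cost and stays at cost $\le T$ under \emph{any} attack of size $\le\Gamma$ confined to its own track; whereas every path whose induced assignment falsifies some clause is forced, inside that clause's gadget, onto a route the adversary can drive strictly above $T$ using only half of its budget on that track. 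Choosing $\Gamma$ equal to twice that amount, the adversary can defeat the paths of both tracks at once precisely when both induced assignments falsify some clause.

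Correctness then splits into the two usual directions. If $\varphi$ is satisfiable, route $\x{1}$ and $\x{2}$ through a fixed satisfying assignment in their respective tracks, each clause gadget via a satisfied literal; any attack of total size $\Gamma$ leaves at most half the budget on one of the two tracks, so the corresponding path costs $\le T$, and \eqref{eq:minmaxmin} has value $\le T$. Conversely, from an optimal pair of value $\le T$ I would first argue that one may pass to a ``canonical'' pair---each path confined to a single track, inducing a consistent assignment, using only literal routes---and then note that if both induced assignments falsified a clause, the adversary could spend half the budget on each track and exceed $T$ on both paths, a contradiction; hence at least one induced assignment satisfies $\varphi$, and $\varphi$ is satisfiable.

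The delicate part is the gadget design. A \emph{deterministic, linear} shortest-path objective cannot by itself forbid using a clause route whose literal conflicts with the variable-diamond choice, so this ``forbidden-pair'' penalty must be delivered by the budgeted adversary; the deviations have to be calibrated so that consistent, satisfying routings are immune to a half-budget attack on their own track while inconsistent or clause-falsifying routings are not, and so that the adversary gains nothing by attacking variable diamonds or by concentrating its whole budget on one track. Making the canonical-form step of the converse rigorous---ruling out paths that weave between the two tracks or exploit non-literal connections---is the other point that requires care.
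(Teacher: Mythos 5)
Your proposal is a plan rather than a proof: the two points you yourself flag as ``delicate''---the clause/variable consistency gadget and the canonical-form step---are precisely where the argument lives, and neither is constructed. Concretely, (i) nothing in your setup forces an optimal pair $(\x{1},\x{2})$ to use one track each: with deviations $d_e\in\{0,1\}$ and polynomially bounded nominal costs there is no mechanism pushing the two solutions apart, so both paths could take the same cheap route and simply absorb the adversary's bounded damage, which destroys the intended correspondence between solutions and truth assignments; (ii) the claim that a path whose diamond choices are inconsistent with its clause routes, or whose induced assignment falsifies a clause, can be driven above $T$ with half the budget, while every consistent satisfying path is immune to any half-budget attack on its track, is exactly the gadget you have not specified, and it is not clear it can be calibrated---the adversary's budget must simultaneously police consistency across all $n$ diamonds and all $m$ clause gadgets of a cheating path, yet remain harmless against honest paths, which is a strong and unverified requirement. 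As written, the reduction cannot be checked for correctness.

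The paper avoids all of this with a one-step reduction you could adopt instead: take the known strongly $\NPhard$ problem of finding two edge-disjoint $s$--$t$ paths minimizing the length of the longer path \cite{li1990complexity}, set $K=2$, $\Gamma=1$, $\hc_e=w_e$ and $d_e=M:=\sum_{e\in E} w_e$. The huge (but still polynomially bounded, since the source problem is strongly $\NPhard$ with polynomially bounded weights) deviation forces any optimal pair in \eqref{eq:minmaxmin} to be edge-disjoint, because a shared edge can be attacked and then both paths cost at least $M$; and for disjoint paths the single attack lands on the shorter path, so the worst case equals the length of the longer path. Hence the min-max-min value is at most $L$ iff two disjoint paths with longer length at most $L$ exist. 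Note also that your insistence on deviations in $\{0,1\}$ is unnecessary for strong $\NPhard$ness: it suffices that all numbers be polynomially bounded in the instance size, which the paper's choice of $M$ already guarantees. (A reduction in the spirit of yours, with unit deviations and $\Gamma=K-1$, is what the paper uses for the \emph{inapproximability} result with $K$ part of the input, via disjoint bounded-length paths; but for fixed $K=2$ the disjoint-paths reduction above is both simpler and complete.)
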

\begin{proof}
Assume that $\Gamma=1$ and $K=2$ and consider a weighted undirected graph $G=(V,E,\pmb{w})$ and two nodes $s$ and $t$ in $V$. It is known that finding two edge-disjoint paths between $s$ and $t$ such that the length of the longer path is minimized is $\NPhard$ in the strong sense \cite{li1990complexity}. Next, consider an instance of problem \eqref{eq:minmaxmin} where we need to find paths between $s$ and $t$ in $G$ and define $\hat{c}_e=w_e$ and $d_e=M$ for each $e\in E$, where $M$ can be chosen as $M=\sum_{e\in E}\hat{c}_e$. Clearly there exist two disjoint paths in $G$ where the longer path has costs lower or equal to some value $L$
% $K$ 
if and only if the latter instance of the min-max-min problem has an optimal value lower or equal to $L$.
% $K$.

% The proof extends to larger values of $k$ by introducing $k-2$ extra edges from $s$ to $t$, where the nominal costs and deviations are set to $0$ and $M$, respectively. Further set $\Gamma$ equal to $k-1$. Clearly any optimal solution for \eqref{eq:minmaxmin} contains all $k-2$ new paths and the adversary increases their cost to $M$. Hence, again there exist two disjoint paths in $G$ where the longer path has costs lower or equal to $K$ if and only if the latter instance of the min-max-min problem has an optimal value lower or equal to $K$.
\end{proof}

Our last complexity result shows that the problem may not be approximable even if the underlying problem is polynomially solvable.
\begin{theorem}
\label{thm:shortestpath-inappr}
Problem \eqref{eq:minmaxmin} for the shortest path problem 
where paths may have up to $5$ edges
cannot be approximated in polynomial time if $K$ is part of the input unless $\P=\NP$.
\end{theorem}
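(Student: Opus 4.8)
The plan is to reduce from a problem that is hard to approximate, most naturally the \emph{minimum hitting set} or \emph{minimum set cover} problem, exploiting the fact that when $K$ is part of the input and the graph is ``thin'' (paths of bounded length), the adversary gets to single out, for each one of the $K$ prepared paths, a small number $\Gamma$ of edges to inflate. First I would build a layered digraph: between $s$ and $t$ place a fixed number of intermediate layers so that every $s$--$t$ path has exactly (at most) $5$ edges, and make the ``middle'' arcs correspond to the elements of the set-cover ground set while the ``gadget'' arcs at the ends encode which set a path represents. The nominal costs $\hc_e$ are taken to be $0$ (or tiny), and the deviations $d_e$ are set to a large value $M$ on exactly the element-arcs, so that the value of a family of $K$ paths under $\Ug$ is governed purely by whether the adversary can, using a budget of $\Gamma$ inflations \emph{per scenario is not the model} — here the single scenario with $\Gamma$ inflated edges — force every one of the $K$ paths to contain an inflated edge. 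Choosing $\Gamma$ equal to the target set-cover size $t$, the adversary can ruin all $K$ paths simultaneously iff the $t$ inflated edges form a hitting set for the collection of element-sets used by the $K$ paths; hence the optimal min-max-min value is $0$ (or small) iff the $K$ paths can be chosen so that \emph{no} set of $t$ elements hits all of them, which I would arrange to be equivalent to the existence of a small cover, and otherwise the value jumps to $M$.

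The key steps, in order: (1) fix the reduction source and the precise correspondence — I would aim for the inapproximability of set cover / dominating set, so that a would-be $\alpha$-approximation for~\eqref{eq:minmaxmin} distinguishes ``value $\le$ small'' from ``value $\ge$ small$\cdot\alpha$'', which forces a separation in the covering instance that is ruled out under $\P\ne\NP$; (2) describe the $5$-edge layered construction explicitly, designating two ``coding'' layers (entry and exit gadgets) and one ``element'' layer, and verify that every $s$--$t$ path has length at most $5$; (3) set $\hc$, $d$, and $\Gamma$ as above and prove the gap lemma: the inner $\max_{\pmb c\in\Ug}\min_k \pmb c^\top \x{k}$ equals $0$ precisely when the element-arcs used across the chosen paths admit no transversal of size $\Gamma$, and equals $M$ otherwise; (4) translate ``admits no transversal of size $\Gamma$'' into the chosen statement about the covering instance, so that a small optimum of~\eqref{eq:minmaxmin} corresponds to a good covering solution and a large optimum to the absence of one; (5) conclude that any polynomial-time approximation with \emph{any} ratio would decide the underlying $\NP$-hard covering problem, because the two cases are separated by a multiplicative gap of $M/\eps$ which can be made arbitrarily large.

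The main obstacle I expect is step (3)–(4): getting the combinatorics of ``$\Gamma$ inflated edges hit all $K$ paths'' to line up \emph{cleanly} with a covering/hitting problem while simultaneously respecting the $5$-edge budget. The tension is that to encode which ``set'' a path represents one needs coding arcs, but those coding arcs must not themselves be inflatable (or the adversary would trivially spend budget there), so I would make the deviations $d_e=0$ on all coding arcs and $d_e=M$ only on element arcs; then I must check that the path polytope is rich enough that the $K$ paths can realize \emph{any} prescribed family of element-subsets of the allowed size (this is where the gadget design does the work). A secondary subtlety is that, unlike the convex-hull uncertainty set, $\Ug$ is the set of \emph{extreme points}, so the adversary's choice is genuinely combinatorial — but this is exactly what makes the hitting-set correspondence exact rather than fractional, and so it works in our favour. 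Once the gap lemma is in place, the inapproximability conclusion is immediate, since the ``yes'' and ``no'' instances of the covering problem are mapped to instances of~\eqref{eq:minmaxmin} whose optimal values differ by an unbounded multiplicative factor.
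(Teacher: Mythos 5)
There is a genuine gap, and it sits exactly where you predicted it would: in steps (3)--(4). Your construction, once unwound, gives the statement ``the optimal value of \eqref{eq:minmaxmin} is $0$ if and only if the decision maker can choose $K$ feasible paths whose element-arcs admit no transversal of size $\Gamma$.'' That is a \emph{packing}-type condition on a subfamily chosen by the decision maker, not a covering condition on the original instance, and your plan to ``arrange'' its equivalence with the existence of a small set cover points the quantifiers the wrong way: in a min-max-min problem a small optimal value means the adversary \emph{fails} to hit all $K$ solutions, so a yes-instance of set cover does not force a large value (the decision maker is free to pick, or even repeat, whatever paths dodge the cover — nothing compels the $K$ paths to collectively encode the whole instance). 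Two further obstacles are left unaddressed: with at most $5$ edges per path, each path can carry only a constant number of element arcs, so the gadget cannot realize ``any prescribed family of element-subsets''; and the hitting/packing condition you actually obtain becomes clean only for $\Gamma=K-1$, where ``no $(K-1)$-transversal'' is equivalent to the $K$ chosen sets being pairwise disjoint — at which point the covering machinery has evaporated and you are back to a disjointness question.

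That disjointness question is precisely what the paper exploits, without any layered gadget: it reduces directly from the $\NPhard$ problem (Itai, Perl, Shiloach) of deciding whether there exist $K$ edge-disjoint $s$--$t$ paths each with at most $5$ edges, setting $\Gamma=K-1$, $\hat{\pmb{c}}=\pmb{0}$, $\pmb{d}=\pmb{1}$. If such disjoint paths exist, at most $K-1$ deviations can touch at most $K-1$ of them, so the value is $0$; if any two chosen paths share an edge, one deviation hits both and $K-2$ more hit the rest, so the value is positive. Your step (5) — that a $0$ versus positive gap kills every approximation ratio — is correct and is the same closing argument as the paper's, but the reduction feeding it must produce that gap from an $\NPhard$ disjoint-paths/packing source rather than from set cover; as written, the core gap lemma of your proposal is not established and, in the covering formulation you chose, cannot be.
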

\begin{proof}
Consider a weighted undirected graph $G=(V,E,\pmb{w})$ and two nodes $s$ and $t$ in $V$. It is known that determining whether there exist $K$ edge-disjoint paths between $s$ and $t$ having at most $5$ edges is $\NPhard$~\cite{ItaiPS82}. By defining $\Gamma=K-1$, $\hat{\pmb{c}}=\pmb{0}$ and $\pmb{d}=\pmb{1}$, the optimal solution cost of~\eqref{eq:minmaxmin} is $0$ if and only if the answer to the decision problem is yes.
\end{proof}
% Clearly the deterministic counterpart of the above problem can be solved in polynomial time using a dynamic programming algorithm.

\subsection{Dynamic programming for the knapsack problem}\label{sec:dp}
We provide a dynamic programming algorithm for the knapsack version of problem \eqref{eq:minmaxmin}, which runs in pseudo-polynomial time. As the complexity of the algorithm is  high, its purpose is essentially theoretical. In what follows we suppose the feasibility set is given by
$$
\X_{KP} = \left\{\pmb{x}\in \{0,1\}^n : \sum_{i\in[n]} w_i x_i \leq C\right\}
$$
where $w_i$ denotes the weight of item $i$ and $C$ the total available capacity. Our algorithm runs in pseudo-polynomial time if $K$ and $\Gamma$ are fixed, and has an exponential running-time in general, in accordance with the complexity stated in Theorem~\ref{thm:uncons}. The algorithm can easily be adapted to handle the unconstrained binary problem by choosing knapsack weights and a capacity such that all binary vectors are feasible and transforming the problem into a minimization problem. For the selection problem, the dynamic programming algorithm extends by enforcing that the capacity constraint be satisfied at equality when computing the costs in step~\ref{step:cost} of Algorithm~\ref{algo:dpk2}.

\begin{algorithm}[htb]
\small
\caption{Solving the robust knapsack problem for $K=2$ and $\Gamma=1$}
\label{algo:dpk2}
\algorithmicrequire \ $\X_{KP}$, $\Ug$, $K=2$, $\Gamma=1$\\
   $\S=\{(0,0,0,0,0,0,0)\}$\\
  \For{$i \in [n]$}{
    \For{$(w^{(1)},w^{(2)},c^{(1)},c^{(2)},d^{(1)},d^{(2)},d^{(1,2)}) \in \S$}{
    \If{$w^{(1)}+w_i\leq C$}{
      $\pmb{s}^{(1)}=(w^{(1)}+w_i,w^{(2)},c^{(1)}+\hc_i,c^{(2)},\max\left(d^{(1)}, d_i\right),d^{(2)},d^{(1,2)})$ \label{step:x1}\\
      \If{$w^{(2)}+w_i\leq C$}{
	$\pmb{s}^{(1,2)}=(w^{(1)}+w_i,w^{(2)}+w_i,c^{(1)}+\hc_i,c^{(2)}+\hc_i,\max\left(d^{(1)}, d_i\right),\newline \max\left(d^{(2)}, d_i\right),\max\left(d^{(1,2)}, d_i\right))$ \label{step:x12}
      }}\ElseIf{$w^{(2)}+w_i\leq C$}
      {
	$\pmb{s}^{(2)}=(w^{(1)},w^{(2)}+w_i,c^{(1)},c^{(2)}+\hc_i,d^{(1)},\max\left(d^{(2)}, d_i\right),d^{(1,2)})$ \label{step:x2}
	}
      
    $\S\leftarrow \S \cup \{\pmb{s}^{(1)},\pmb{s}^{(2)},\pmb{s}^{(1,2)}\}$
    }
  }
	Compute $\pmb{s}_{\max}\in \S$ with maximal $cost(\pmb{s})$ (see Eq.~\eqref{eq:cost}) \label{step:cost}

  \algorithmicensure \ $\pmb{s}_{\max}$
\end{algorithm}

Algorithm~\ref{algo:dpk2} follows the classical dynamic programming algorithm for scheduling jobs on unrelated machines proposed in~\cite{Horowitz:1976} by iterating over all items and creating labels $\pmb{s}\in \S$ for each possible variant of adding the item to the feasible solutions or not. For ease of notation 
% in Algorithm \ref{algo:dpk2} 
we present the idea of the algorithm for the case $K=2$ and $\Gamma=1$, and sketch the generalization in~\ref{app:DP}.

For each item $i\in[n]$ we assume that the current set of labels $\S$ has been constructed by deciding whether the first $i-1$ items are added to (partial) solution $\x{1}\in \X_{KP}$, $\x{2}\in \X_{KP}$ or to both. Hence, the partial solutions correspond to sets of items $S^{(1)}\subseteq [i-1]$ and $S^{(2)}\subseteq [i-1]$, respectively. As indexing the label $\pmb{s}$ by $S^{(1)}$ and $S^{(2)}$ could possibly lead to exponentially many labels, we instead define the label as
\[
\pmb{s}=(w^{(1)},w^{(2)},c^{(1)},c^{(2)},d^{(1)},d^{(2)},d^{(1,2)}),
\]
where $w^{(k)}=\sum_{i \in S^{(k)}}w_i$, $c^{(k)}=\sum_{i \in S^{(k)}}c_i$, and $d^{(k)}=\max_{i \in S^{(k)}}d_i$ for $k\in\{1,2\}$. The value $d^{(1,2)}$ represents the highest deviations among the items included in both solutions up to now, that is, $d^{(1,2)}=\max_{i \in S^{(1)}\cap S^{(2)}}d_i$. Next, we have four possibilities concerning the addition of item $i$ to the partial solutions:
\begin{enumerate}
 \item $S^{(1)}\leftarrow S^{(1)}\cup\{i\}$, yielding the new label $\pmb{s}^{(1)}$, see step~\ref{step:x1}
 \item $S^{(2)}\leftarrow S^{(2)}\cup\{i\}$, yielding the new label $\pmb{s}^{(2)}$, see step~\ref{step:x2}
 \item $S^{(1)}\leftarrow S^{(1)}\cup\{i\}$ and $S^{(2)}\leftarrow S^{(2)}\cup\{i\}$, yielding the new label $\pmb{s}^{(1,2)}$, see step~\ref{step:x12}.
 \item item $i$ is added to none of the two solutions, which does not change the current label.
\end{enumerate}
Notice that the first three possibilities occur only if the resulting partial solutions do not exceed the capacity of the knapsack.

Finally, we detail how to calculate the costs of a label, i.e. the worst-case over all scenarios in $\Ug$ for the corresponding solution. Note that since $\Gamma=1$ that cost can be computed by examining the following three possibilities: either we allow a deviation on the item with the largest deviation $d_i$ contained in the first solution or contained in the second solution or in both solutions. Therefore the worst-case costs can be calculated by
%  \begin{equation}
%  \label{eq:cost}
%  cost(\pmb{s})=\max\{\min(c^{(1)}+d^{(1)},c^{(2)}),\min(c^{(1)}+d^{(1,2)},c^{(2)}+d^{(1,2)}),\newline\min(c^{(1)},c^{(2)}+d^{(2)})\}.
%  \end{equation}
\begin{equation}\label{eq:cost}
\begin{aligned}
cost(\pmb{s})=\max\Big\{ &\min(c^{(1)}+d^{(1)},c^{(2)}),\\
&\min(c^{(1)}+d^{(1,2)},c^{(2)}+d^{(1,2)}), \\
&\min(c^{(1)},c^{(2)}+d^{(2)}) \Big\}.
\end{aligned}
\end{equation}
To investigate the run-time of the algorithm consider $\maxc=\max_{i\in[n]} \hat{c}_i$, $\maxd=\max_{i\in[n]}d_i$ and $\overline{w} = \max_{i\in[n]} w_i$. Then the largest value we have to consider for $c^{(1)},c^{(2)}$ is $n\maxc$, the largest value for $w^{(1)},w^{(2)}$ is $n\maxw$ and the largest value for $d^{(1)},d^{(2)},d^{(1,2)}$ is $\maxd$. Therefore the number of different labels we have to consider in the algorithm is at most $n^4\maxc^2\overline{w}^2\maxd^3$, so that the running time of Algorithm~\ref{algo:dpk2} is in $O(n^5\maxc^2\overline{w}^2\maxd^3)$. 
Since we may record the indices that deviate instead of the deviations themselves, another valid bound for the running time of the algorithm is $O(n^8\maxc^2\overline{w}^2)$. Specifically, we could define any label in $\S$ as the 7-tuple $\pmb{s}'=(w^{(1)},w^{(2)},c^{(1)},c^{(2)},i^{(1)},i^{(2)},i^{(1,2)})$ where $i^{(1)},i^{(2)}$ and $i^{(1,2)}$ belong to $[n]$. 

Many dynamic programming algorithms lead to approximation algorithms that can provide $(1+\epsilon)$-approximate solutions with a complexity that is polynomial in the input of the problem and $1/\epsilon$. We prove below that this is not the case here, by adapting the reduction from the equipartition problem to the knapsack problem from \cite{kellerer2003knapsack}.
\begin{theorem}
  There is no FPTAS for the knapsack variant of problem \eqref{eq:minmaxmin} unless $\P=\NP$, even in the case $\Gamma=1$.
 \end{theorem}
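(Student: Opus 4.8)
The plan is to reduce the $\NP$-hard equipartition problem (see, e.g.,~\cite{kellerer2003knapsack}) to the knapsack variant of~\eqref{eq:minmaxmin} with $K=2$ and $\Gamma=1$, adapting the reduction from equipartition used there. The guiding idea is to push the (possibly exponentially large) equipartition numbers entirely into the item weights while keeping the profits uniform, so that the optimal value equals a cardinality and is therefore polynomially bounded. This is precisely what makes a \emph{multiplicative} yes/no gap available, and it is why the weakly-$\NP$-hard reduction behind Theorem~\ref{thm:uncons}, whose yes/no gap is additive in a huge number, does not already imply the claim: an FPTAS run with an exponentially small $\eps$ would be allowed exponential time there.

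From an equipartition instance $a_1,\dots,a_{2m}$ with $\sum_i a_i=2A$ --- asking whether some $S$ has $|S|=m$ and $\sum_{i\in S}a_i=A$ --- I would first assume $\sum_i a_i$ is even and $a_i\le A$ for all $i$ (both checkable in linear time; a violation gives a trivial no-instance). I then build a knapsack instance, in the max-min-max form used for knapsack in this paper (nonnegative nominal profits, $c_i\in\{\hat c_i,\hat c_i-d_i\}$, profits allowed to become negative as in $\Ug$), on $2m$ items with weights $w_i=a_i$, uniform profits $\hat c_i=1$, deviations $d_i=2m+1$, and capacity $C=A$. Because $d_i$ exceeds the nominal value $\hat c^\top\x{k}\le 2m$ of any solution, a short case analysis in the spirit of~\eqref{eq:cost} shows that a pair of solutions of positive value must use disjoint item sets $S^{(1)},S^{(2)}$, both nonempty, with value $\min(|S^{(1)}|,|S^{(2)}|)$, while overlapping pairs or pairs with an empty set have value at most $0$. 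Hence $\opt=\max\{\,\min(|S^{(1)}|,|S^{(2)}|) : S^{(1)},S^{(2)}\text{ disjoint},\ \sum_{i\in S^{(k)}}a_i\le A\ (k=1,2)\,\}$, an integer in $\{0,\dots,m\}$.

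The heart of the proof is the gap. Since $|S^{(1)}|+|S^{(2)}|\le 2m$, value $m$ is attained iff $|S^{(1)}|=|S^{(2)}|=m$, which forces $S^{(1)},S^{(2)}$ to partition $\{1,\dots,2m\}$ into two parts of weight exactly $A$ (their weights are $\le A$ and sum to $2A$), i.e.\ iff the equipartition instance is a yes-instance; otherwise $\opt\le m-1$. So $\opt=m$ for yes-instances and $\opt\le m-1$ for no-instances, with $m$ polynomial in the input. Now assume an FPTAS exists and run it with $\eps=\tfrac1{2m}$: since $1/\eps$ is polynomial the run is polynomial, and it returns a feasible solution of value at least $\opt/(1+\eps)$. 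For a yes-instance this is at least $2m^2/(2m+1)>m-1$, hence --- being an integer in $\{0,\dots,m\}$ --- exactly $m$; for a no-instance it is at most $\opt\le m-1$. Comparing the returned value with $m$ decides equipartition in polynomial time, so $\P=\NP$. The same instance, padded with empty solutions, covers every fixed $K\ge 2$ as well as $K$ in the input.

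I expect the only delicate step to be the structural claim that no degenerate pair of solutions --- overlapping sets, unused items, or sets of unequal cardinality --- can beat the balanced equipartition; this is where the choices $d_i=2m+1$ and the single common capacity $C=A$ are used, and it must be verified with care. Once that is settled, the FPTAS amplification is routine.
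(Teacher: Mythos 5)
Your reduction is essentially the paper's own proof: the same weights $w_i=a_i$, unit nominal profits, a downward deviation whose magnitude exceeds any attainable nominal profit (the paper writes $d_i=-n$ in the convention $c_i=\hat c_i+\delta_i d_i$; you write a decrease of $2m+1$), capacity equal to half the total weight, the structural observation that the objective of a pair of solutions is $\min(|S^{(1)}|,|S^{(2)}|)$ for disjoint feasible pairs and at most $0$ otherwise, and the conclusion that an FPTAS run with $\eps$ inversely polynomial in the number of items would decide equipartition exactly because the optimal value is a polynomially bounded integer. This is correct, and the case $K=2$ is all the theorem needs, since an FPTAS for the knapsack variant of \eqref{eq:minmaxmin} would in particular have to handle instances with $K=2$; you also articulate more explicitly than the paper why the weak-hardness reduction behind Theorem~\ref{thm:uncons} does not already give the result.

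One caveat: your closing claim that ``the same instance, padded with empty solutions, covers every fixed $K\ge 2$'' is not correct, because the decision maker is not obliged to use empty solutions, and for $K\ge 3$ the equivalence $\opt=m \Leftrightarrow$ yes-instance breaks. For instance, with $m=2$ and $a=(1,1,1,3)$ (so $A=3$ and no equipartition exists), the three feasible solutions $\{1,2\},\{1,3\},\{2,3\}$ each have cardinality $2=m$ and weight $2\le A$, and since every item is avoided by at least one of them, any single deviation leaves a surviving solution of value $m$; hence $\opt=m$ for a no-instance when $K=3$. This does not affect the theorem as stated---hardness at $K=2$ suffices to rule out an FPTAS---but the extension to larger fixed $K$ would need a genuinely different argument and should otherwise be dropped.
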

 \begin{proof}
We reduce the equipartition problem to the decision version of \eqref{eq:minmaxmin} for $\Gamma=1$ and $K = 2$. Given an instance of the equipartition problem i.e. $a_i\in\N$ for each $i\in N=[n]$ where $n$ is even, the equipartition problem is to decide if there exists an index set $I\subset N$ with $|I|=\frac{n}{2}$ such that $\sum_{i\in I}a_i=\sum_{i\in N\setminus I}a_i$. This problem is known to be $\NPhard$ \cite{gareyjohnson} . We define a knapsack instance as follows: for each $i\in N$ we set $w_i=a_i$, $\hat{c}_i=1$ and $d_i=-n$. The capacity is set to $C=\frac{1}{2}\sum_{i\in N}a_i$. Clearly, there exists an equipartition of the elements of $N$ if and only if there exists a solution of Problem \eqref{eq:minmaxmin} for the knapsack instance with profit at least $\frac{n}{2}$. This yields the result, as an FPTAS would compute an $\frac{1}{n+1}$-approximate solution in polynomial time, which would be optimal for that instance.
\end{proof}

Note that while it is not possible to construct an FPTAS for the robust knapsack problem, it stands to reason that the dynamic programming approach presented in this section can be used to construct a PTAS based on cost inflation.

\section{Exact algorithm and lower bounds}
\label{sec:ilp}

% \MP{MP: I moved here the the NP-hardness of computing the objective function. This justifies why we can't provide a compact formulation for the problem, right?. We should be more clear about that.}
We first prove that even evaluating the objective function of Problem~\eqref{eq:minmaxmin}, i.e., calculating
\[
cost(\xx):=\max_{\pmb{c}\in \Ug}\min_{k\in[K]} \pmb{c}^\top \x{k}
\]
for fixed $\xx=(\x{1},\ldots,\x{K})$ is strongly $\NP$-hard. This makes it unlikely that a compact ILP formulation for Problem~\eqref{eq:minmaxmin} exists.
\begin{theorem}
 Evaluating the objective function of Problem \eqref{eq:minmaxmin} for a given solution is strongly $\NP$-hard.
\end{theorem}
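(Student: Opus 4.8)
The plan is to reduce from a known strongly $\NP$-hard problem to the problem of evaluating $cost(\xx)$ for a fixed tuple $\xx=(\x{1},\ldots,\x{K})$. The natural candidate is the max–min part of the expression: for fixed solutions, the adversary must choose $\pmb\delta\in\{0,1\}^n$ with $\sum_i\delta_i\le\Gamma$ so as to maximize $\min_{k\in[K]}\big(\hc^\top\x{k}+\sum_i\delta_i d_i x^{(k)}_i\big)$. Because the $K$ solutions may share components, the adversary is choosing a set $D\subseteq[n]$ of at most $\Gamma$ indices to ``hit'', and the gain for solution $k$ is the total deviation $\sum_{i\in D}d_i x^{(k)}_i$ over the indices of $D$ that lie in the support of $\x{k}$. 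Maximizing the minimum over $k$ of these gains is a covering/packing-type selection problem, and this is where I would look for the hardness.

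The concrete reduction I would attempt is from \textsc{Max Coverage} or, more cleanly, from a problem like \textsc{Dominating Set} / \textsc{Set Cover} in its decision form, or from \textsc{Independent Set} via a gadget: encode each solution $\x{k}$ as the indicator of a ``test'' (e.g., a vertex or a set), and each coordinate $i\in[n]$ as an ``element'' that appears in the supports of exactly those $\x{k}$ corresponding to tests containing it. Set $\hc=\pmb 0$ and $d_i=1$ for all $i$, and fix $\Gamma$ to be the budget of the source problem. Then the adversary picks $\Gamma$ elements, and $\min_k\sum_{i\in D}x^{(k)}_i$ equals the minimum, over all tests, of how many chosen elements that test contains. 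Asking whether $cost(\xx)\ge t$ then becomes: is there a $\Gamma$-subset of elements hitting every test at least $t$ times? For $t=1$ this is precisely a hitting-set / set-cover feasibility question, which is strongly $\NP$-hard, so this already gives the result. I would also double-check that the construction yields a legal instance of \eqref{eq:minmaxmin} — i.e., that the $\x{k}$ can simultaneously be made feasible for some combinatorial structure $\X$; since the theorem only asks about evaluating the objective for a \emph{given} solution, we are free to take $\X=\{0,1\}^n$ (the unconstrained case) or simply not constrain $\X$ at all, which removes this concern.

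The main obstacle I anticipate is not the hardness itself but getting a clean, self-contained reduction with integer data of polynomial size and no hidden numerics: one must make sure the ``$\min_k$'' genuinely forces a covering condition rather than being trivially satisfied, and that the target value $t$ and the deviations $d_i$ are chosen so that yes-instances and no-instances are separated by an integral gap (taking $d_i=1$, $\hc=\pmb 0$ makes everything $\{0,1,\ldots\}$-valued and sidesteps this). A secondary check is that $K$, the number of solutions, is allowed to be part of the input — which it is, since the theorem statement places no restriction on $K$ — so encoding one solution per ``test'' is permissible. If one instead wanted hardness for \emph{fixed} $K$, the above would not suffice and a different gadget would be needed, but for the stated theorem the set-cover-style reduction with $K$ growing is the most direct route.
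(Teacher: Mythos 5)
Your reduction is correct and is essentially the paper's own proof: the paper also reduces from set cover (your hitting-set view is the same problem) by taking one solution per universe element, one coordinate per set, $\hat{\pmb{c}}=\pmb{0}$, $d_i=1$, $\Gamma$ equal to the cover budget, and asking whether $cost(\xx)\ge 1$. Your observations that $\X$ can be taken unconstrained and that unit data gives strong $\NP$-hardness match the paper's construction as well.
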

\begin{proof}
 Given an integer $N$ and a collection $\mathcal{S}$ of $m$ sets $S_i\subseteq [N]$, the set cover problem looks for a sub-collection of $\mathcal{S}$ of cardinality not greater than $L$ and whose union equals $[N]$. We construct a reduction through the following instance of $cost(\xx)$. We set $K=N$, $\Gamma=L$, $n=m$, $\hat{c}_i=0$ and $d_i=1$ for each $i\in[n]$. Further, we define $x^{(k)}_i=1$ iff $k\in S_i$. Clearly, $cost(\xx)\geq 0$. We prove next that $cost(\xx)\geq 1$ if and only if the answer to the set cover instance is \emph{yes}.
 
 Let us define $\mathcal{Z}=\{\pmb{z}\in\{0,1\}^n:\sum_{i\in[n]} z_i\leq \Gamma\}$ as the set of vectors $\pmb{z}$ describing costs $\pmb{c}=(\hc_i+d_i z_i)_{i\in[n]}\in \Ug$. There exists a bijection between the elements of $\mathcal{Z}$ and the sub-collections of $\mathcal{S}$ of cardinality not greater than $\Gamma$. Further, we see that $\pmb{c}^\top \pmb{x}^{(k)}=\sum_{i\in [n]} z_i x^{(k)}_i$, so setting $z_i=1$ implies that $\sum_{i\in [n]} z_i x^{(k)}_i \geq 1$ for each $k\in S_i$. From the definition of $cost(\xx)$, we have that
 \begin{equation}
 \label{eq:costred}
 cost(\xx)\geq 1\Leftrightarrow \exists \pmb{z}\in \mathcal{Z}, \forall k\in [K]: \sum_{i\in [n]} z_i x^{(k)}_i \geq 1.
 \end{equation}
 If the answer to the set cover problem is \emph{yes} and is provided by the sub-collection indexed by $\mathcal{M}$, then we set $z_i=1$ for each $i\in \mathcal{M}$ and obtain from \eqref{eq:costred} that $cost(\xx)\geq 1$. If the answer is \emph{no}, then for each $\pmb{z}\in \mathcal{Z}$, there exists $k\in [K]$ such that $\sum_{i\in [n]} z_i x^{(k)}_i = 0$, and we obtain $cost(\xx) = 0$.
\end{proof}

In the following we provide an ILP formulation for Problem \eqref{eq:minmaxmin} and derive a row-and-column generation algorithm based on this formulation. Consider first an arbitrary discrete uncertainty set $\cU=\left\{ \pmb{c}^1,\ldots , \pmb{c}^m\right\}$ and let binary variables $\p_{kj}$ define an assignment between the scenarios and the solutions, where $\p_{kj}=1$ if and only if $\x{k}$ has the minimal objective value over all $\x{1},\ldots ,\x{K}$ in scenario $\pmb{c}^j$. We can reformulate Problem \eqref{eq:minmaxmin} as
\begin{equation}\label{eq:assignmentformulation}\tag{Master}
\begin{aligned}
\min \quad&\ \omega \\
 \text{s.t.} \quad & \ \omega\geq \sum_{k\in[K]}\p_{kj} \left( \sum_{i\in[n]} c^j_i x^{(k)}_i\right) & \forall j \in [m] \\
 & \sum_{k\in[K]}\p_{kj} = 1 & \forall j \in [m] \\
 & \p_{kj}\in \{0,1\} & \forall k\in[K],\ j \in [m]\\
 & \x{k}\in \X & \forall k \in [K],
\end{aligned}
\end{equation}
where the product $\p_{kj}x^{(k)}_i$ can be linearized introducing the additional variables $\px_{kji}=\p_{kj}x^{(k)}_i$ and rewriting~\eqref{eq:assignmentformulation} to
\begin{equation}
\begin{aligned}
\min \quad&\ \omega \\
 \text{s.t.} \quad & \ \omega\geq \sum_{k\in[K]}\sum_{i\in[n]} c^j_i\px_{kji} & \forall j \in [m] \\
 & \sum_{k\in[K]}\p_{kj} = 1 & \forall j \in [m] \\
%  & \px_{ij\ell} \leq \x{i}_{\ell}& \forall \ i \in [k], j\in [m], \ell \in [n] \\
%  & \px_{ij\ell} \leq \p_{ij}& \forall \ i \in [k], j\in [m], \ell \in [n] \\
 & \px_{kji} \geq x^{(k)}_i+\p_{kj}-1& \forall k \in [K], j\in [m], i \in [n] \\
 & \p_{kj}\in \{0,1\} & \forall k\in[K],\ j \in [m]\\
 & \px_{kji}\ge 0 & \forall k \in [K], j\in [m], i \in [n] \\
 & \x{k}\in \X & \forall k \in [K].
\end{aligned}
\end{equation}
Note that \eqref{eq:assignmentformulation} has exponentially many variables and constraints in case of discrete budgeted uncertainty. The first ingredient of our approach, described in Algorithm~\ref{alg:generatescenarios}, is to solve \eqref{eq:assignmentformulation} for a starting set $\cU'\subset \Ug$ and to iteratively add a new scenario which is the optimal solution of problem
% \begin{equation}\label{eq:dualprobalgorithm}\tag{Slave}
% \begin{aligned}
% \max & \  z \\
%  s.t.  \quad & z\le c^\top\x{i} \quad \forall \ \ell=1,\ldots ,k \\
%  & c\in U .
% \end{aligned}
% \end{equation}
\begin{equation}\label{eq:dualprobalgorithm}\tag{Slave}
% \max\left\{ z\ :\ z\le \pmb{c}^\top\x{i} \quad \forall \ i\in[k],\ \pmb{c}\in \Ug \right\}
\max_{\pmb{c}\in\Ug} \min_{k\in[K]} \pmb{c}^\top\x{k}
\end{equation}
for the current solution $\xx$ to the restricted master problem.
Note that the optimal value of the latter problem is the objective value $cost(\xx)$ of $\x{1},\ldots ,\x{K}$ for problem \eqref{eq:minmaxmin}, which can be computed through the following IP formulation:
\begin{equation}\label{eq:IPformulationcosts}
\begin{aligned}
\max & \quad z \\
s.t. \quad & z\le \hat c^\top \x{k} + \sum_{i=1}^{n} \delta_id_i\x{k}_i \ k\in [K] \\
& \sum_{i=1}^{n} \delta_i\le \Gamma \\
& \delta_i\in\left\{ 0,1\right\} \ i\in [n] .
\end{aligned}
\end{equation}
Clearly the optimal value of \eqref{eq:assignmentformulation} for a subset of scenarios is a lower bound for Problem \eqref{eq:minmaxmin} while the optimal value of \eqref{eq:dualprobalgorithm} is an upper bound. Therefore Algorithm \ref{alg:generatescenarios} iteratively calculates upper and lower bounds with decreasing gap. A similar idea for robust two-stage problems was already presented in \cite{zeng2013solving}.

Algorithm \ref{alg:generatescenarios} calculates an optimal solution of Problem \eqref{eq:minmaxmin}. Since there is a finite number of feasible solutions, we can only generate a finite number of scenarios in the loop and therefore the algorithm terminates in a finite number of steps.
\begin{algorithm}[]
\caption{~~Row-and-column generation algorithm}
\label{alg:generatescenarios}                           
% \begin{algorithmic}[1]
%  \Require $U\subset\R^{n}$, $X\subseteq\binvar{n}$
%  \Ensure optimal solution of Problem~\eqref{eq:minmaxmin}
\algorithmicrequire \ $\X$, $\Ug$, $K$\\
$\cU' \leftarrow \emptyset$ \\
Choose any $\pmb{c}\in \Ug$ \\
\Repeat{$LB = UB$}{
$\cU' \leftarrow \cU'\cup\{\pmb{c}\}$ \\
Solve~\eqref{eq:assignmentformulation} with respect to $\cU'$ \\
Set $\xx$ as its optimal solution, $LB$ as its objective value \\
Solve~\eqref{eq:dualprobalgorithm} with respect to $\xx$ \\
Set $\pmb{c}$ as its optimal solution, $UB$ as its objective value}
% $UB \leftarrow cost(\xx)$} 
% \State\While{$LB < UB$}{
%   add $c^*$ to $U'$
%   Solve~\eqref{eq:assignmentformulation} with respect to $\cU'$; set $\xx$ as its optimal solution,\newline $LB$ as its objective value and $UB \leftarrow cost(\xx)$
%   }
%   \State calculate optimal solution $\x{1},\ldots, \x{k}$ of \eqref{eq:assignmentformulation} with $U'$
%   \State calculate optimal solution $c^*$ of \eqref{eq:dualprobalgorithm}\label{step:generatescenario}
%  \Until $c^*\in U'$
\algorithmicensure \ $\xx$
% \end{algorithmic}
\end{algorithm}

As mentioned above, Algorithm \ref{alg:generatescenarios} iteratively calculates a non-decreasing sequence of lower bounds for Problem \eqref{eq:minmaxmin}. Nevertheless as our computational experiments show these lower bounds are hard to compute and tend to be far from the optimal value even after one hour of computation time; see Section \ref{sec:xp}. In the following we present a different lower bound for Problem \eqref{eq:minmaxmin} which turns out to be tighter as well as easier to compute.
\begin{observation}
The optimal value of problem
\begin{equation}\label{eq:max-minLB}\tag{MMLB}
\max_{\pmb{c}\in \Ug}\min_{\pmb{x}\in \X} \pmb{c}^\top \pmb{x}
\end{equation}
is a lower bound for Problem \eqref{eq:minmaxmin}.
\end{observation}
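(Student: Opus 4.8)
The plan is to establish the claim via an elementary pointwise inequality followed by monotone application of $\max$ and $\min$ operators, in the spirit of weak min-max duality; no reformulation or combinatorial argument is needed.

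First I would fix an arbitrary feasible tuple $(\x{1},\ldots,\x{K})\in\X^K$ and an arbitrary scenario $\pmb{c}\in\Ug$. Since each $\x{k}$ lies in $\X$, the set $\{\x{1},\ldots,\x{K}\}$ is a subset of $\X$, hence minimizing $\pmb{c}^\top\pmb{x}$ over all of $\X$ can only decrease the value relative to minimizing over this finite subset:
\[
\min_{k\in[K]} \pmb{c}^\top \x{k} \ \ge\ \min_{\pmb{x}\in\X} \pmb{c}^\top\pmb{x}.
\]

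Next I would take the maximum over $\pmb{c}\in\Ug$ on both sides, which preserves the inequality, obtaining for every fixed tuple $(\x{1},\ldots,\x{K})\in\X^K$
\[
\max_{\pmb{c}\in\Ug}\min_{k\in[K]} \pmb{c}^\top \x{k} \ \ge\ \max_{\pmb{c}\in\Ug}\min_{\pmb{x}\in\X} \pmb{c}^\top\pmb{x}.
\]
The key point is that the right-hand side is exactly the optimal value of \eqref{eq:max-minLB} and does not depend on $(\x{1},\ldots,\x{K})$. Therefore I can take the minimum of the left-hand side over all $(\x{1},\ldots,\x{K})\in\X^K$ while the right-hand side stays unchanged, which yields
\[
\min_{\x{1},\ldots,\x{K}\in\X}\ \max_{\pmb{c}\in\Ug}\ \min_{k\in[K]} \pmb{c}^\top \x{k} \ \ge\ \max_{\pmb{c}\in\Ug}\min_{\pmb{x}\in\X} \pmb{c}^\top\pmb{x},
\]
i.e.\ the optimal value of \eqref{eq:max-minLB} bounds Problem~\eqref{eq:minmaxmin} from below.

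There is no genuine obstacle here; the only thing to be careful about is the order in which the $\max$ and $\min$ operators are applied (in particular, that the $\min$ over the $K$-tuples is taken \emph{after} the scenario-independent lower bound has been isolated on the right-hand side). One could also phrase this more compactly by noting that \eqref{eq:max-minLB} is the $K=|\X|$ (or $K\ge n+1$ in the convex setting) relaxation of \eqref{eq:minmaxmin} in which every feasible solution is preselected, but the three-line inequality chain above is the cleanest presentation.
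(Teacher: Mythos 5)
Your proof is correct and follows essentially the same argument as the paper: for any fixed tuple $(\x{1},\ldots,\x{K})\in\X^K$, the inner minimum over the finite subset $\{\x{1},\ldots,\x{K}\}$ dominates the minimum over all of $\X$, and taking the maximum over $\pmb{c}\in\Ug$ and then the minimum over tuples preserves the bound. The paper states this in a single inequality; your version merely spells out the intermediate monotonicity steps, which is fine.
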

\begin{proof}
Clearly for every solution $\x{1},\ldots ,\x{K}$ we have
\[
\max_{\pmb{c}\in \Ug}\min_{\pmb{x}\in \X} \pmb{c}^\top \pmb{x} \le \max_{\pmb{c}\in \Ug}\min_{k\in [K]} \pmb{c}^\top \pmb{x}^{(k)}
\]
which proves the result.
\end{proof}
The problem \eqref{eq:max-minLB} can be solved by a classical row-generation method as follows:
For a subset of solutions $\X'\subset \X$ calculate an optimal solution $(\pmb{c}^*,z^*)$ of problem
% \begin{equation*}
% \begin{aligned}
\begin{align*}
\max\ & z \\
\text{s.t. } & z\le \pmb{c}^\top \pmb{x} & \forall \pmb{x}\in \X' \\
 & \pmb{c}\in \Ug 
% \end{aligned}
% \end{equation*}
\end{align*}
and afterwards solve the deterministic problem 
\[
\min_{\pmb{x}\in\X} \ (\pmb{c}^*)^\top \pmb{x} .
\]
Add the optimal solution of the latter problem to $\X'$ and iterate. Stop if the latter optimal value is larger than or equal to $z^*$.

%Algorithm \ref{alg:generatescenarios} can typically be improved by embedding the generation of the inequalities and constraints inside a single branch-and-cut-and-price algorithm solving \eqref{eq:assignmentformulation}. This is a classical speed-up technique in Benders decomposition where one only generates constraints along the iterations (e.g. \cite{FortzP09}). 
%Yet, the main bottleneck of our Algorithm \ref{alg:generatescenarios} lies in the weak linear programming formulation of \eqref{eq:assignmentformulation}. Another weakness of \eqref{eq:assignmentformulation} lies in the symmetry among optimization vectors $\x{i}$, although the latter can be improved by introducing symmetry breaking linear constraints.

\section{Heuristic algorithms}
\label{sec:heur}
In this section we present two heuristic algorithms which are based on the idea to find a partition of the uncertainty set into $K$ subsets and calculate the optimal min-max solution for each of the subsets, see the general scheme presented in Algorithm~\ref{alg:heuristic}. To end up with a fast algorithm the min-max problem for each subset should be computationally tractable. For both heuristics we derive a $K$-partition of the budgeted uncertainty set such that each subset remains a budgeted uncertainty set or has a structure which is close to a budgeted uncertainty set. In both cases we can show that each of the min-max problems in Algorithm \ref{alg:heuristic} can be solved by solving a polynomial number of deterministic problems \eqref{eq:min}.
\begin{algorithm}[htb]
\caption{Heuristic Algorithm for Problem \eqref{eq:minmaxmin} with $K\le n$.}\label{alg:heuristic}
\algorithmicrequire \ $\X$, $\Ug$, $K$\\
  Calculate a partition $\U_1\cup \ldots \cup \U_K = \Ug$. \\
  Calculate \label{step:minmaxheuristic}
  \[
  \x{k} = \argmin_{\pmb{x}\in \X} \max_{\pmb{c}\in\U_k} \pmb{c}^\top \pmb{x} \qquad \forall k\in[K].
  \]

  \algorithmicensure \ $\x{1},\ldots ,\x{K}$
\end{algorithm}

\subsection{Heuristic 1}
In this section we derive a partition of the budgeted uncertainty set $\Ug$ such that each of the subsets has a similar structure as the classical budgeted uncertainty set.  Furthermore each subset of the partition covers scenarios which are close to each other in the sense that there exists a solution $\pmb{x}\in \X$ which works well for most of the scenarios. We show that for each subset of the partition the min-max problem can be solved by solving a polynomial number of deterministic problems.

% \paragraph*{Partition of the Budgeted Uncertainty Set}
We partition the budgeted uncertainty set such that in each subset $\cU_k$ a subsequence of items is selected and in each scenario of the subset at least one of the items deviates from its mean value. More precisely, let any ordering of the indices $i_1,\ldots ,i_n$ be given. Without loss of generality, we assume $i_\ell=\ell$ in the following presentation. For $t:=\lfloor \frac{n}{K}\rfloor$, we define 
% for a given ordering of the indices $i_1,\ldots ,i_n$ and $t:=\lfloor \frac{n}{K}\rfloor$ we define 
% \begin{equation}\label{eq:partitionheuristic}
% % % U_j:=\left\{ c\in \R^n \ | \ c_\ell=\hat c_\ell + \delta_\ell d_\ell, \sum_{l=1}^{(j-1)t}\delta_{i_l}=0, \sum_{l=(j-1)t+1}^{jt}\delta_{i_l}\ge 1, \sum_{l=(j-1)t + 1}^{n} \delta_{i_l} \le \Gamma, \delta_\ell\in [0,1]\right\}
% \end{equation}
\begin{align}
\cU_k:=\Bigg\{ \pmb{c}\in \R^n \ : \ &c_i=\hat c_i + \delta_i d_i, \ \pmb{\delta}\in \{0,1\}^n, \label{eq:partitionheuristic}\\
&\sum_{i=1}^{(k-1)t}\delta_{i}=0, \sum_{i=(k-1)t+1}^{kt}\delta_{i}\ge 1,\sum_{i=(k-1)t + 1}^{n} \delta_{i} \le \Gamma \Bigg\} \nonumber
\end{align}
for each $k=1,\ldots , K-1$ and 
% \[
% U_k:=\left\{ c\in \R^n \ | \ c_\ell=\hat c_\ell + \delta_\ell d_\ell, \sum_{l=1}^{(k-1)t}\delta_{i_l}=0, \sum_{l=(k-1)t+1}^{n}\delta_{i_l}\ge 1, \sum_{l=(k-1)t + 1}^{n} \delta_{i_l} \le \Gamma, \delta_\ell\in [0,1]\right\} .
% \]
\begin{align*}
\cU_K:=\Bigg\{ \pmb{c}\in \R^n \ : \ &c_i=\hat c_i + \delta_i d_i, \ \pmb{\delta}\in \{0,1\}^n, \\
&\sum_{i=1}^{(K-1)t}\delta_{i}=0, \sum_{i=(K-1)t+1}^{n}\delta_{i}\ge 1, \sum_{i=(K-1)t + 1}^{n} \delta_{i} \le \Gamma \Bigg\} .
\end{align*}
Note that the only difference in the definition of $\cU_K$ is that the second sum instead of containing $t$ summands, additionally contains all summands which are left due to rounding of $t$. For ease of notation we do not consider this special case in the following.

It is easy to see that $\U_1\cup \dots \cup \U_K= \Ug\setminus\left\{\hat c\right\}$. Furthermore all of the subsets $\cU_k$ have a budgeted-like structure. We will use this structure in the following theorem to show that the classical min-max problem in Step \ref{step:minmaxheuristic} of Algorithm \ref{alg:heuristic} can be solved in polynomial time if an oracle for the underlying deterministic problem is given. We define in the following $(x)_+:=\max\left\{ x,0\right\}$. The following result is related to Theorem~3 from \cite{bertsimas2003combinatorial} and its generalizations in~\cite{Poss17}.
\begin{theorem}\label{thm:polyTimePartitionProblems}
The min-max problem with uncertainty set $\cU_k$ can be solved by solving the deterministic problems
\begin{equation}\label{eq:dualMinMaxFinal}
\alpha^*\Gamma - \beta^* + t(\beta^*-\alpha^*)_+  + \min_{\pmb{x}\in \X} \hat{\pmb{c}}^\top \pmb{x} + \pmb{w}^\top \pmb{x}
\end{equation}
where
\[
w_i = \begin{cases}
0 & \text{ if } i\le(k-1)t \\
( d_{i}+\beta^* -\alpha^*)_+ - (\beta^*-\alpha^*)_+ & \text{ if } (k-1)t+1\le i\le kt \\
(d_{i} -\alpha^*)_+ & \text{ if } i\ge kt+1
\end{cases} 
\]
for all values 
\begin{align*}
(\alpha^*,\beta^*)\in A\times \left\{ 0\right\} & \cup  \left\{ (\alpha,\beta) \ | \ \alpha\in A, \beta = \alpha\right\} \\
 &\cup \left\{ (\alpha,\beta) \ | \  \alpha\in A, \beta = (\alpha - d_{i})_+: \ i=(k-1)t+1,\ldots ,kt\right\},
\end{align*}
where $A:=\left\{ d_{i} \ | \ i=(k-1)t+1 ,\ldots ,n\right\}\cup\left\{ 0\right\}$ and returning the solution of the problem with the smallest optimal value.
\end{theorem}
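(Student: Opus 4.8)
The plan is to dualize the inner maximization $\max_{\pmb{c}\in\cU_k}\pmb{c}^\top\pmb{x}$ for a fixed $\pmb{x}\in\X$, following the classical scheme of Bertsimas and Sim but adapted to the three-constraint description of $\cU_k$ in \eqref{eq:partitionheuristic}. First I would write the inner problem, after substituting $c_i=\hat c_i+\delta_i d_i$, as a linear program in $\pmb{\delta}\in[0,1]^n$ (the LP relaxation is exact here, just as for the standard budgeted set, because for fixed $\pmb{x}$ the constraint matrix on $\pmb{\delta}$ is an interval matrix together with the single ``$\le\Gamma$'' budget and the single ``$\ge 1$'' coverage row, so the relevant polyhedron is integral — this integrality claim is the first thing to verify carefully). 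The objective is $\hat{\pmb{c}}^\top\pmb{x}+\max_{\pmb{\delta}}\sum_i d_i x_i \delta_i$ subject to $\delta_i=0$ for $i\le(k-1)t$, $\sum_{i=(k-1)t+1}^{kt}\delta_i\ge 1$, $\sum_{i=(k-1)t+1}^{n}\delta_i\le\Gamma$, and $0\le\delta_i\le 1$.

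Next I would take the LP dual. Let $\alpha\ge 0$ be the dual of the budget constraint $\sum\delta_i\le\Gamma$, let $\beta\ge 0$ be the dual of the coverage constraint $\sum_{i=(k-1)t+1}^{kt}\delta_i\ge 1$ (entering with a sign opposite to $\alpha$), and let $\pmb{p}\ge 0$ be the duals of the upper bounds $\delta_i\le 1$. The dual objective is $\alpha\Gamma-\beta+\sum_{i\ge(k-1)t+1}p_i$, and for each $i$ in the middle block $(k-1)t+1\le i\le kt$ the dual constraint is $p_i\ge d_i x_i-\alpha+\beta$, while for $i\ge kt+1$ it is $p_i\ge d_i x_i-\alpha$; items with $i\le(k-1)t$ are unconstrained and contribute only their nominal cost. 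Minimizing over $\pmb{p}\ge 0$ gives $p_i=(d_i x_i-\alpha+\beta)_+$ on the middle block and $p_i=(d_i x_i-\alpha)_+$ on the tail. Since $x_i\in\{0,1\}$, one has $(d_i x_i-\alpha+\beta)_+ = x_i (d_i-\alpha+\beta)_+ + (1-x_i)(\beta-\alpha)_+$ (and similarly for the tail with $\beta=0$), which after rearranging produces exactly the expression $\alpha\Gamma-\beta+t(\beta-\alpha)_+ + \hat{\pmb{c}}^\top\pmb{x}+\pmb{w}^\top\pmb{x}$ with the stated $w_i$; here the term $t(\beta-\alpha)_+$ collects the $\sum_{i=(k-1)t+1}^{kt}(1-x_i)(\beta-\alpha)_+$ contributions (there are $t$ middle items) and the $(\beta-\alpha)_+$ pieces are moved out of $w_i$ on that block. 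Strong LP duality then lets me replace the inner max by $\min_{\alpha,\beta\ge 0}$ of that quantity, and the outer $\min_{\pmb{x}\in\X}$ commutes with the $\min_{\alpha,\beta}$, giving a joint minimization over $(\alpha,\beta)$ and $\pmb{x}$.

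The last step is to argue that it suffices to enumerate $(\alpha,\beta)$ over the finite set stated. For fixed $\pmb{x}$, the dual objective is piecewise linear and convex in $(\alpha,\beta)$, so an optimum is attained at a vertex of the feasibility region or at a breakpoint of one of the $(\cdot)_+$ terms; the breakpoints in $\alpha$ occur at $\alpha=d_i$ for $i\ge(k-1)t+1$ (and $\alpha=0$), giving the set $A$, and the breakpoints of $(\beta-\alpha)_+$ and of $(d_i-\alpha+\beta)_+$ relative to a fixed $\alpha\in A$ occur at $\beta=0$, $\beta=\alpha$, and $\beta=(\alpha-d_i)_+$ for the middle block — precisely the three families listed. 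A standard argument (as in \cite{bertsimas2003combinatorial,Poss17}) shows that since the set of candidate $(\alpha,\beta)$ does not depend on $\pmb{x}$, one may fix $(\alpha,\beta)$ first, solve the resulting deterministic problem $\min_{\pmb{x}\in\X}\hat{\pmb{c}}^\top\pmb{x}+\pmb{w}^\top\pmb{x}$, and take the best over all $O(n^2)$ pairs. I expect the main obstacle to be the bookkeeping in the middle block — verifying that the coverage constraint dualizes cleanly, that the LP relaxation is genuinely tight for $\cU_k$ (the presence of both a lower-bound coverage row and an upper-bound budget row must be checked so that no fractional vertex appears), and that the identity $(d_i x_i-\alpha+\beta)_+=x_i(d_i-\alpha+\beta)_++(1-x_i)(\beta-\alpha)_+$ is used consistently so that the constant $t(\beta-\alpha)_+$ and the definition of $w_i$ match exactly.
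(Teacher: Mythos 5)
Your proposal is correct and follows essentially the same route as the paper's proof: dualize the inner maximization over $\cU_k$ (with $\alpha$ for the budget row, $\beta$ for the coverage row), substitute the optimal bound duals via the identity $(d_i x_i-\alpha+\beta)_+ = x_i(d_i-\alpha+\beta)_+ + (1-x_i)(\beta-\alpha)_+$ to obtain the joint minimization with the constant $t(\beta-\alpha)_+$ and the stated $w_i$, and then enumerate the kinkpoints of the piecewise linear convex dual objective, which lie at intersections of the vertical kink lines $\alpha=d_i$ (and the boundaries) with the slope-one lines $\beta=\alpha$ and $\beta=(\alpha-d_i)_+$ — exactly the candidate set in the theorem. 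Your explicit total-unimodularity justification for the exactness of the LP relaxation of $\cU_k$ is a point the paper leaves implicit, and it is a correct addition rather than a deviation.
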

\begin{proof}
For each given $\pmb{x}\in \X$ we can rewrite the objective value $\max_{\pmb{c}\in \cU_k} \pmb{c}^\top \pmb{x}$ by
\begin{align*}
\hat{\pmb{c}}^\top \pmb{x} + \max_{\pmb{\delta}} \ &  \sum_{i=(k-1)t+1}^{n} \delta_{i} d_{i} x_{i} \\
\text{s.t. } & \sum_{i=(k-1)t+1}^{kt}\delta_{i}\ge 1 \\
& \sum_{i=(k-1)t + 1}^{n} \delta_{i} \le \Gamma \\
& \delta_{i}\in [0,1] \quad i=(k-1)t + 1,\ldots ,n .
\end{align*}
The dual of the above linear program is
\begin{equation}\label{eq:dualMinMax}
\begin{aligned}
\hat{\pmb{c}}^\top \pmb{x} + \min \ & \alpha\Gamma - \beta + \sum_{i=(k-1)t+1}^{n}\gamma_i \\
\text{s.t. } & \alpha - \beta + \gamma_i \ge d_{i}x_{i} \quad i=(k-1)t+1,\ldots ,kt \\
& \alpha + \gamma_i\ge d_{i}x_{i} \quad i=kt+1 ,\ldots ,n\\
& \alpha, \beta \ge 0 \\
& \gamma_i\ge 0 \quad  i=(k-1)t + 1,\ldots ,n .
\end{aligned}
\end{equation}
In each optimal solution of the latter problem for $\gamma_i$ it holds
\[
\gamma_i = ( d_{i}x_{i}+\beta -\alpha)_+ = x_{i}(d_{i}+\beta -\alpha)_+ + (1-x_{i})(\beta-\alpha)_+
\]
for each $i=(k-1)t+1, \ldots ,kt$ and 
\[
\gamma_i = (d_{i}x_{i} -\alpha)_+ = x_{i}(d_{i} -\alpha)_+
\]
for each $i=kt+1, \ldots ,n$. Therefore, substituting the latter equations in the objective function of Problem \eqref{eq:dualMinMax} we obtain that problem
\[
\min_{\pmb{x}\in \X}\max_{\pmb{c}\in \cU_k} \pmb{c}^\top \pmb{x}
\]
is equivalent to
\begin{equation}%\label{eq:dualMinMaxFinal}
\begin{aligned}
\min \ & \hat{\pmb{c}}^\top \pmb{x} + \alpha\Gamma - \beta + \sum_{i=(k-1)t+1}^{kt}x_{i}(d_{i}+\beta -\alpha)_+ + (1-x_{i})(\beta-\alpha)_+ \\
& + \sum_{i=kt+1}^{n} x_{i}(d_{i} -\alpha)_+\\
\text{s.t. } & \pmb{x}\in \X, \alpha,\beta\ge 0.
\end{aligned}
\end{equation}

\begin{figure}[ht!]
\centering
\def\svgwidth{7 cm}
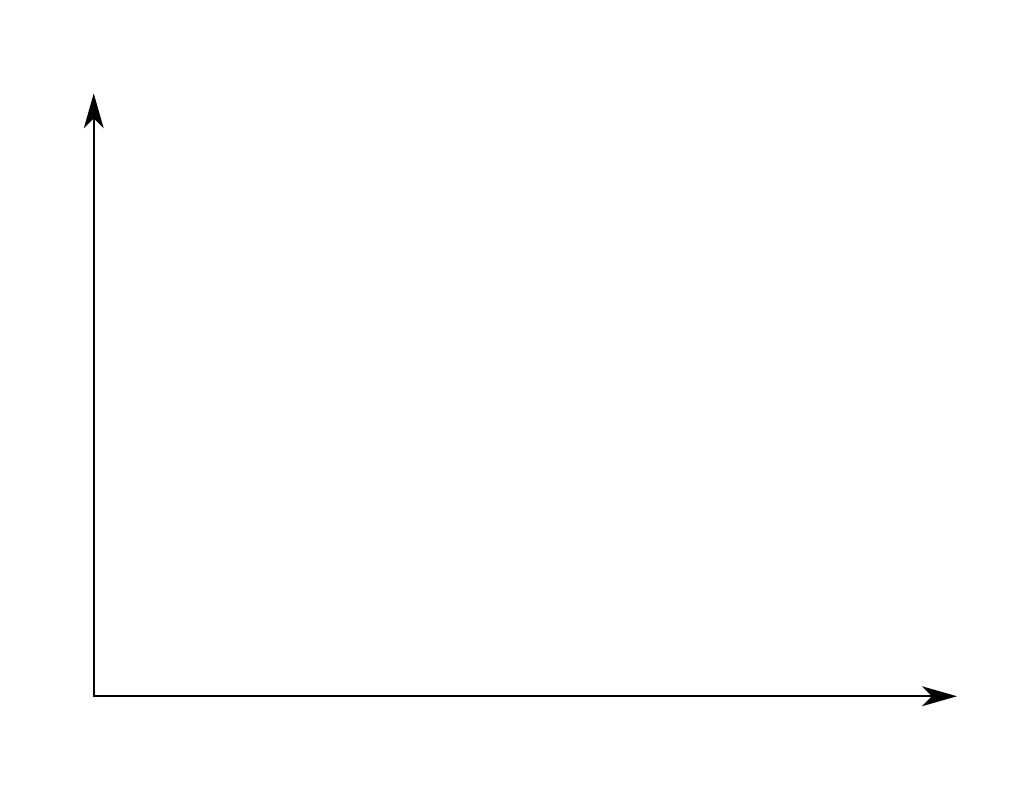
\caption{Non-differentiable regions of $f_1(\alpha,\beta)$ and $f_2(\alpha,\beta)$, respectively drawn as dotted and solid lines. The squares represent the kinkpoints of the objective function on the domain visible in the figure.\label{fig:epigraph}}
\end{figure}

For any fixed $\pmb{x}^*$ the objective function of the latter problem is a piece-wise linear and convex function, so its minimum is reached at one of its kinkpoints. More specifically, the function is the sum of an affine function and two convex piece-wise linear functions, $f_1(\alpha,\beta)=\sum_{i=(k-1)t+1}^{kt}x_{i}(d_{i}+\beta -\alpha)_+ + (1-x_{i})(\beta-\alpha)_+$ and $f_2(\alpha,\beta)=\sum_{i=kt+1}^{n} x_{i}(d_{i} -\alpha)_+$. The non-differentiable regions of both functions are half-lines, parallel to the lines $\alpha-\beta=0$ and $\alpha=0$, respectively, see Figure~\ref{fig:epigraph} for an illustration. Therefore, any kinkpoint of the objective function is obtained at the intersection of these lines, proving the result.

\end{proof}
\begin{corollary}
The heuristic presented in Algorithm~\ref{alg:heuristic} for the partition given in~\eqref{eq:partitionheuristic} requires the solution of $\mathcal O(K(2+t)n)$ many deterministic problems.
\end{corollary}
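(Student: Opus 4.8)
The plan is to combine the enumeration bound of Theorem~\ref{thm:polyTimePartitionProblems} with a straightforward count of the index set over which that theorem tells us to enumerate. The first observation is that the partition step of Algorithm~\ref{alg:heuristic} is purely combinatorial and does not invoke the deterministic oracle at all, so the only deterministic problems ever solved are those arising in Step~\ref{step:minmaxheuristic}, namely the $K$ min-max problems over $\cU_1,\ldots,\cU_K$. Hence it suffices to bound the number of deterministic problems needed to solve one such min-max problem and to multiply by $K$.

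Next I would fix $k\in\{1,\ldots,K-1\}$ and count how many deterministic problems \eqref{eq:dualMinMaxFinal} Theorem~\ref{thm:polyTimePartitionProblems} prescribes for $\cU_k$. The set of pairs $(\alpha^*,\beta^*)$ splits into three parts: $A\times\{0\}$ contributes $|A|$ pairs; the diagonal $\{(\alpha,\alpha):\alpha\in A\}$ contributes $|A|$ pairs; and $\{(\alpha,(\alpha-d_i)_+):\alpha\in A,\ i=(k-1)t+1,\ldots,kt\}$ contributes at most $|A|\,t$ pairs, one for each choice of $\alpha\in A$ and each of the $t$ indices of the $k$-th block. Therefore at most $|A|(2+t)$ deterministic problems are solved for $\cU_k$. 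Since $A=\{d_i : i=(k-1)t+1,\ldots,n\}\cup\{0\}$ has at most $n-(k-1)t+1\le n+1$ elements, we get $|A|=\mathcal{O}(n)$, so a single block costs $\mathcal{O}((2+t)n)$ deterministic problems.

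Finally, summing over $k=1,\ldots,K$ gives the claimed $\mathcal{O}(K(2+t)n)$. The block $\cU_K$ deserves one line of comment: its defining sum ranges over $t+(n\bmod K)$ indices rather than $t$, but — as already noted after \eqref{eq:partitionheuristic} — this rounding is disregarded, and in any case the extra cost $\mathcal{O}(n(t+K))$ is absorbed into $\mathcal{O}(K(2+t)n)$ since $K\ge 1$. I do not expect any genuine obstacle here: the statement is a direct corollary of Theorem~\ref{thm:polyTimePartitionProblems}, and the only things to get right are the bookkeeping of the three families of $(\alpha^*,\beta^*)$ pairs and the remark that everything outside Step~\ref{step:minmaxheuristic} is free of oracle calls.
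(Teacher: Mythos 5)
Your proposal is correct and follows essentially the same route as the paper: count the $\mathcal{O}((2+t)n)$ candidate $(\alpha^*,\beta^*)$ pairs from Theorem~\ref{thm:polyTimePartitionProblems} for each subset $\cU_k$ and multiply by the $K$ subsets handled in Step~\ref{step:minmaxheuristic}. Your extra remarks (that the partition step makes no oracle calls, and that the rounding in $\cU_K$ is harmlessly absorbed) are fine but not needed beyond what the paper already states.
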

\begin{proof}
For each of the $K$ subsets $\cU_k$ the number of $(\alpha,\beta)$ values for which we have to solve the deterministic problem is in $\mathcal O (2n+tn)$. Since we have to solve the min-max problem for each of the $K$ subsets in Algorithm \ref{alg:heuristic} in total we have to solve $\mathcal O(K(2+t)n)$ deterministic problems.
\end{proof}

\subsection{Heuristic 2}
In this section we present a second heuristic for Problem~\eqref{eq:minmaxmin}, which is also based on partitioning the set $\cU$ into $K$ sets $\cU_1\cup\ldots\cup\cU_K$, such that the min-max problem on each set can be solved in polynomial time. Differently from the previous approach, we find the partition dynamically.

Consider again the uncertainty set
\[ \Ug = \left\{ \pmb{c}\in\R^n : c_i = \hat{c}_i + d_i \delta_i,\ \pmb{\delta}\in\{0,1\}^n,\ \sum\limits_{i\in[n]} \delta_i \le \Gamma \right\} \]
and let us assume that for a specific item $i\in[n]$, we enforce $\delta_i=1$. We denote the resulting uncertainty set as $\cU^{+i}$. It is also possible to enforce $\delta_i=0$, in which case the resulting set is denoted as $\cU^{-i}$. Note that $\cU^{+i}\cap\cU^{-i}=\emptyset$ and $\cU^{+i}\cup\cU^{-i}=\Ug$. We can repeat this branching step on the resulting subsets, until we have constructed a partition consisting of $K$ sets.

This requires two rules: one rule to decide on which of the current sets to branch, and another rule to decide which variable to fix. This is similar to a branch-and-bound method, where we need to decide a node selection and a variable selection policy. 

We propose the following rules. For branching, we choose a set for which less than $\Gamma$ many items are already fixed to $1$ (otherwise it consists of a single scenario) and the min-max problem has the highest objective value. This is a greedy choice by which we can hope to reduce the objective value of the current solution. For variable selection, we choose an item $i\in[n]$ that is not yet fixed in the current set, is used in the corresponding min-max solution, and has the highest deviation $d_i$. This way, we branch on what is estimated to be the current most important item.

Note that after fixing some $\delta_i$ variables to be either $0$ or $1$, the resulting uncertainty set is a classical budgeted uncertainty set and applying Theorem~3 from~\cite{bertsimas2003combinatorial}, the resulting min-max problem can be solved by solving $\mathcal O(n)$ many deterministic problems. With every branching, we need to solve two new such subproblems. To complete, the heuristic thus requires the solution of $\mathcal{O}(Kn)$ many deterministic problems, and runs overall in polynomial time if the deterministic problem can be solved in polynomial time. Algorithm~\ref{alg:heuristic2} summarizes this procedure.

\begin{algorithm}[htb]
\caption{Heuristic Algorithm for Problem \eqref{eq:minmaxmin} with $K\le n$.}\label{alg:heuristic2}
\algorithmicrequire \ $\X$, $\Ug$, $K$\\
$\mathcal{L}\leftarrow \{\Ug\}$ \\
\Repeat{$|\mathcal{L}|=K$}{
$\cU\leftarrow \argmax \{ \min_{\pmb{x}\in\X} \max_{\pmb{c}\in\cU} \pmb{c}^\top\pmb{x} :  \cU\in \mathcal{L}\text{ not fixed completely} \}$ \label{algchooseU} \\
$\pmb{x} \leftarrow \min_{\pmb{x}\in\X} \max_{\pmb{c}\in\cU} \pmb{c}^\top\pmb{x}$ \\
$i\leftarrow \argmax_{i\in[n]} \{ d_i : x_i=1, \text{ $i$ not yet fixed in $\cU$} \}$ \\
$\mathcal{L} \leftarrow (\mathcal{L} \cup \{ \cU^{+i},\cU^{-i}\}) \setminus \{\cU\}$
}
\algorithmicensure \ $\x{1},\ldots ,\x{K}$ as minimizers for each $\cU\in \mathcal{L}$
\end{algorithm}

We give an example for the approach with $K=3$ in Figure~\ref{fig:h2}.
\begin{figure}[htb]
\begin{center}
\includegraphics[width=0.6\textwidth]{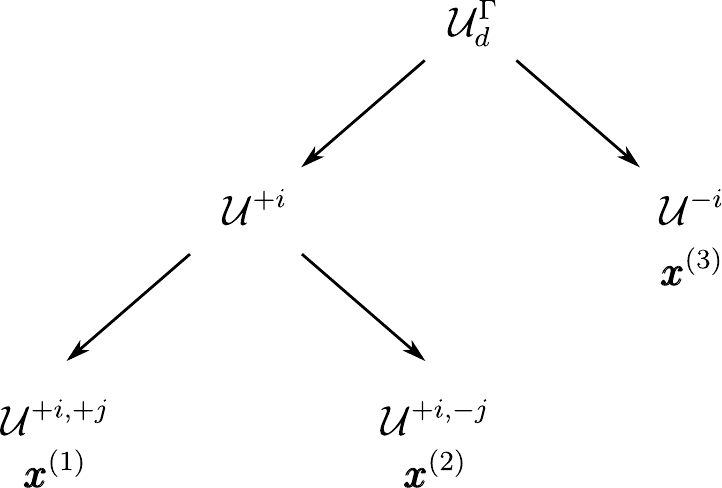}
\caption{Example for Heuristic~2.}\label{fig:h2}
\end{center}
\end{figure}
Here, we first solve the min-max problem using the original set $\Ug$. In the resulting solution, let $i$ be the item with the highest deviation $d_i$. We branch by removing $\Ug$ from our current list of uncertainty sets, and instead consider $\cU^{+i}$ and $\cU^{-i}$. We solve the min-max problem on each. Now let us assume that the resulting objective value is larger on set $\cU^{+i}$. We choose this set for our next branching. Let $j\neq i$ be the item in the corresponding solution with largest deviation value $d_j$. We remove $\cU^{+i}$ from our current partition and instead add $\cU^{+i,+j}$ and $\cU^{+i,-j}$. After solving the respective min-max problems, we have found a partition of $\cU$ into three sets. The algorithm terminates and gives a heuristic solution to Problem~\eqref{eq:minmaxmin} by using an optimal min-max solution on each set of the partition.

Note that if a set $\cU\in\mathcal{L}$ maximizing $\min_{\pmb{x}\in\X} \max_{\pmb{c}\in\cU} \pmb{c}^\top\pmb{x}$ (see Step~\ref{algchooseU} of Algorithm~\ref{alg:heuristic2}) is already completely fixed, it consists of only a single cost scenario and cannot be partitioned further. In this case, the heuristic has in fact determined an optimal solution to Problem~\eqref{eq:minmaxmin}, as the objective value has reached the lower bound \eqref{eq:max-minLB}. Algorithm~\ref{alg:heuristic2} still carries on so that $K$ solutions are found in total.

\section{Computational experiments}
\label{sec:xp}

\subsection{Setup}

In this section we present the results of computational experiments for the minimization variant of the knapsack problem and the shortest path problem. We show results for the exact row-and-column generation method presented in Algorithm \ref{alg:generatescenarios} (RCG), the lower bound \eqref{eq:max-minLB} and both heuristic algorithms (Heur1 and Heur2), presented in Section \ref{sec:heur}. To analyze the quality of the heuristic solutions we compare the results to the classical min-max solution (MM) and to the solution of a heuristic (HeurPS) already presented in \cite{eufinger2017robust}. The latter heuristic calculates $K$ random Pareto-scenarios of the uncertainty set $\Ug$ and returns the optimal deterministic solution for each of the scenarios. 
% For more details see \cite{eufinger2017robust}. 
Note that HeurPS is a more general heuristic that does not exploit the special structure of the budgeted uncertainty set.

All algorithms were implemented in C++. All objective values $cost(\xx)$ were calculated by solving the IP formulation \eqref{eq:IPformulationcosts}. The lower bound \eqref{eq:max-minLB} was calculated by using the row-generation procedure presented in Section \ref{sec:ilp}. The min-max problems appearing in the two heuristics are solved via dualized reformulations (e.g., \eqref{eq:dualMinMax} for the first heuristic) rather than solving the polynomial number of deterministic problems, as the dualized MILP appeared to be faster than the latter approach on our instances. All occurring IP and LP formulations as well as the master- and the slave-problem in Algorithm~\ref{alg:generatescenarios}, were implemented in CPLEX 12.8. For the minimum knapsack problem and the shortest path problem we used the classical IP formulations. To avoid symmetric solutions with the same objective value we added the symmetry-breaking constraints 
\[
\sum_{i=1}^{n} ix^{(j)}_i +1 \le  \sum_{i=1}^{n} i x^{(j+1)}_i \quad j\in[K-1]
\]
to the master-problem \eqref{eq:assignmentformulation}.

The initial ordering of the indices $i_1,\ldots ,i_n$ for Heuristic $1$ was selected by sorting the deviations in non-decreasing order $d_{i_1}\le \ldots \le d_{i_n}$. This choice was motivated by preliminary tests on random instances.

The Pareto-scenarios for the heuristic presented in \cite{eufinger2017robust} were calculated by drawing $K$ random vectors $\pmb{\lambda}_k$ from a uniform distribution and selecting the Pareto-scenarios
\[
\pmb{c}^{k}:=\argmax_{\pmb{c}\in \Ug} \pmb{\lambda}_k^\top \pmb{c} .
\]
We set a timelimit of $3600$ seconds for each of the algorithms. All computations were calculated on a cluster of 64-bit Intel(R) Xeon(R) CPU E5-2603 processors running at 1.60 GHz with 15MB cache. Each algorithm was restricted to one thread.

% \MP{I don't like much the following two paragraphs: in my opinion, there is too much details on tables and instances that appear much later. I suspect you want to introduce the notations used in the tables, but maybe that should be done much more concisely, in just one or two phrases. I can also do it.}
% \JK{I originally shifted this here because it is the same description for the knapsack and the shortest path. I shortened the paragraphs now a bit by explaining the columns with the same meaning just once. Anyway I think a detailed explanation of the column values should be shown somewhere in a computations section.}

We consider min-knapsack and shortest path problems. The min-knapsack problem can be written as
$$
\min \left\{\sum_{i\in [n]} c_i x_i : \sum_{i\in [n]} w_i x_i \geq W, \; x\in \{0,1\}^n\right\}.
$$
Our random instances were generated as in \cite{ChasseinGKP19}. For each dimension $n$ the costs $c_i$ and the weights $w_i$ were drawn from a uniform distribution on $\left\{ 1,\ldots ,100\right\}$. The knapsack capacity $W$ was set to $35\%$ of the sum of all weights. For each knapsack instance we generated a budgeted uncertainty set with mean vector $\hat{\pmb{c}}=\pmb{c}$ and a random deviation vector $\pmb{d}$ where each $d_i$ is drawn uniformly in $\left\{ 1,\ldots ,c_i\right\}$. For each dimension $n\in\{100,200,300,400\}$ we generate $10$ random instances and for each instance we vary the parameters $\Gamma\in\{3,6\}$ and $K\in\{10,20,30\}$.

Our shortest path computations were performed on the instances generated in \cite{hanasusanto2015k}. The authors create graphs with $20,25,\ldots ,50$ nodes, corresponding to points in the Euclidean plane with random coordinates in $[0,10]$. They choose a budgeted uncertainty set where $\hat c_{ij}$ is set to the Euclidean distance of node $i$ to node $j$ and the deviations are set to $d_{ij} = \frac{c_{ij}}{2}$. The parameter $\Gamma$ is chosen from $\left\{3,6\right\}$. For each dimension $n$ we tested all $100$ instances generated in \cite{hanasusanto2015k} and for each instance we vary the parameters $\Gamma\in\{3,6\}$ and $K\in\{10,20,30\}$.

\subsection{Results on knapsack problems}

The results regarding the RCG and the MMLB are shown in Table~\ref{tbl:LB_Knapsack}. Each row shows the average over all $10$ knapsack instances of the following values (rounded down to one decimal place):
the number of items $n$; the parameter $\Gamma$; the number of calculated solutions $K$; the percental gap (Gap) between the MMLB and the best LB calculated by the RCG during the timelimit; the total calculation time $t$ in seconds of the MMLB (or RCG); the number of terminated calculations $\#solved$ of the MMLB (or RCG) during the timelimit; the number of iterations $\#iter$ performed by the MMLB (or RCG); the percental optimality gap (Opt-Gap) of the best LB and UB calculated by the RCG during timelimit. Recall that MMLB does not depend on the value of $K$.

\begin{table}[h]
\centering
\resizebox{0.9\textwidth}{!}
{
\begin{tabular}{rrr|r|rrr|rrrr}
&&&&\multicolumn{3}{c}{MMLB}&\multicolumn{4}{|c}{RCG}\\
\hline
$n$ & $\Gamma$ & $K$ & Gap (\%) & $t$ & $\#solved$ & $\#iter$ & $t$ & $\#solved$ & $\#iter$ & Opt-Gap (\%) \\ 
\hline 
100&3&10&8.0&0.4&10&14.0&3600.0&0&2.0&16.9\\
100&3&20&8.0&0.4&10&14.0&3600.0&0&2.0&16.9\\
100&3&30&8.0&0.4&10&14.0&3600.0&0&2.0&16.9\\
% \hline
100&6&10&13.4&1.9&10&33.6&3600.0&0&2.1&27.7\\
100&6&20&13.4&1.9&10&33.6&3600.0&0&2.0&27.8\\
100&6&30&13.4&1.9&10&33.6&3600.0&0&2.0&27.8 \\
\hline
200&3& - & - &3.1&10&37.6 \\
200&6& - & - &98.2&10&121.9 \\
300&3& - & - &4.1&10&53.8 \\
300&6& - & - &292.5&10&197.4 \\
400&3& - & - &5.8&10&47.0 \\
400&6& - & - &1639.6&9&247.1
\end{tabular}
}
\caption{Results of MMLB and RCG for the knapsack problem.}
\label{tbl:LB_Knapsack}
\end{table}

% \begin{table}[h]
% \centering
% \resizebox{0.5\textwidth}{!}
% {
% \begin{tabular}{rrr|rrr}
% &&&\multicolumn{3}{c}{MMLB}\\
% \hline
% $n$ & $\Gamma$ & $K$ & $t$ & $\#solved$ & $\#iter$ \\ 
% \hline 
% 200&3&10&3.1&10&37.6 \\
% 200&3&20&3.1&10&37.6 \\
% 200&3&30&3.1&10&37.6 \\
% 200&6&10&98.2&10&121.9 \\
% 200&6&20&98.2&10&121.9 \\
% 200&6&30&98.2&10&121.9 \\
% 300&3&10&4.1&10&53.8 \\
% 300&3&20&4.1&10&53.8 \\
% 300&3&30&4.1&10&53.8 \\
% 300&6&10&292.5&10&197.4 \\
% 300&6&20&292.5&10&197.4 \\
% 300&6&30&292.5&10&197.4 \\
% 400&3&10&5.8&10&47.0 \\
% 400&3&20&5.8&10&47.0 \\
% 400&3&30&5.8&10&47.0 \\
% 400&6&10&1639.6&9&247.1 \\
% 400&6&20&1639.6&9&247.1 \\
% 400&6&30&1639.6&9&247.1 
% \end{tabular}
% }
% \caption{Results of MMLB for the knapsack problem.}
% \label{tbl:LB_Knapsack2}
% \end{table}

% In Table~\ref{tbl:LB_Knapsack} we show the results for the RCG and the MMLB. 
Even for a dimension of $n=100$ the RCG hit the timelimit of $1$ hour in every instance. Furthermore the lower bound given by the MMLB is at least $8\%$ better than the lower bound of the RCG after $1$ hour. The optimality gap of the RCG after $1$ hour is still at least $17\%$ and sometimes even $27\%$. Due to this observation and the time consuming calculations of the RCG we did not test the RCG for larger instances. The bounds found using MMLB are stronger. For nearly all configurations we could calculate the bound for all instances during the timelimit. The total calculation time is very small for most of the instances. Interestingly the calculation time increases significantly for the larger uncertainty sets with $\Gamma=6$ while for $\Gamma=3$ all instances could be solved in seconds. This is mostly due to the larger number of iterations performed by the MMLB for $\Gamma=6$.

The results for all three heuristics and the min-max solution are shown in Table~\ref{tbl:Heur_Knapsack}.
% and \ref{tbl:Heur_ShortestPath}. 
Each row shows the average over all $10$ knapsack instances of the following values (rounded down to one decimal place):
the number of items $n$; the parameter $\Gamma$; the number of calculated solutions $K$; the percental gap between the MMLB and the solution of Heur1 (or Heur2/HeurPS/MM). We do not record any calculation times here since all procedures return a solution in a few seconds for all instances. The percental gap is always compared to the MMLB since as Table~\ref{tbl:LB_Knapsack} indicates, in nearly all instances this lower bound is tighter than the one provided by the RCG.

\begin{table}[h]
\centering
\resizebox{0.9\textwidth}{!}
{
\begin{tabular}{rrr|rrrr}
$n$ & $\Gamma$ & $K$ & Heur1 & Heur2 & HeurPS & MM \\ 
\hline 
100&3&10&2.5&1.3&7.3&5.4\\
100&3&20&2.2&0.8&6.5&5.4\\
100&3&30&2.5&0.5&6.1&5.4\\
100&6&10&3.7&3.2&9.2&7.0\\
100&6&20&4.0&1.9&8.8&7.0\\
100&6&30&3.6&1.8&8.4&7.0\\
200&3&10&1.9&1.0&4.6&3.4\\
200&3&20&1.6&0.7&4.6&3.4\\
200&3&30&1.7&0.5&4.6&3.4\\
200&6&10&3.4&2.7&7.0&5.2\\
200&6&20&3.2&2.2&7.0&5.2\\
200&6&30&3.1&1.9&7.0&5.2
\end{tabular}
\quad
\begin{tabular}{rrr|rrrr}
$n$ & $\Gamma$ & $K$ & Heur1 & Heur2 & HeurPS & MM \\ 
\hline 
300&3&10&1.5&1.0&3.1&2.7\\
300&3&20&1.4&0.7&3.1&2.7\\
300&3&30&1.3&0.6&3.1&2.7\\
300&6&10&2.7&2.2&5.1&4.2\\
300&6&20&2.5&1.8&5.1&4.2\\
300&6&30&2.4&1.5&5.1&4.2\\
400&3&10&1.0&0.7&2.3&1.9\\
400&3&20&0.9&0.5&2.3&1.9\\
400&3&30&0.7&0.4&2.3&1.9\\
400&6&10&2.1&1.6&3.7&3.0\\
400&6&20&1.7&1.4&3.7&3.0\\
400&6&30&1.6&1.2&3.7&3.0
\end{tabular}
}
\caption{Percental Gaps of Heur1, Heur2, HeurPS and MM to the lower bound MMLB for the knapsack problem.}
\label{tbl:Heur_Knapsack}
\end{table}
% In Table \ref{tbl:Heur_Knapsack} we show the results for all three heuristics and the min-max solution. 
Heur2 outperforms the other heuristics for all configurations. The gap to the lower bound is always smaller than $3.2\%$. The gaps of Heur1 are also very small, at most $3.7\%$, but always larger than the gaps of Heur2. The gaps of HeurPS are the largest in most of the instances, even larger than the gaps of the min-max solution. 
% To summarize Heur1 and Heur2 seem to be a good choice to solve Problem \eqref{eq:minmaxmin} for the knapsack problem.

In Figure \ref{fig:KnapsackLinePlot} we show a line plot of the same average gaps as in Table \ref{tbl:Heur_Knapsack} over $10$ instances with $n=150$ and $\Gamma=6$ for all $K\in\left\{ 1,\ldots ,n\right\}$. Heur2 shows the best performance. Heur1 returns solutions which are significantly better than the min-max solutions as well. Unfortunately due to the number of items $t:=\lfloor \frac{n}{K}\rfloor$ considered in each of the $K-1$ subsets in the partition constructed in Heur1, the size of the last set in the partition varies  depending on $K$. This explains the fluctuating gaps of Heur1. The gaps of Heur2 seem to be much more stable, as the algorithm guarantees an improving objective value with increasing $K$.
\begin{figure}[htb]
\centering
\includegraphics[scale=0.7]{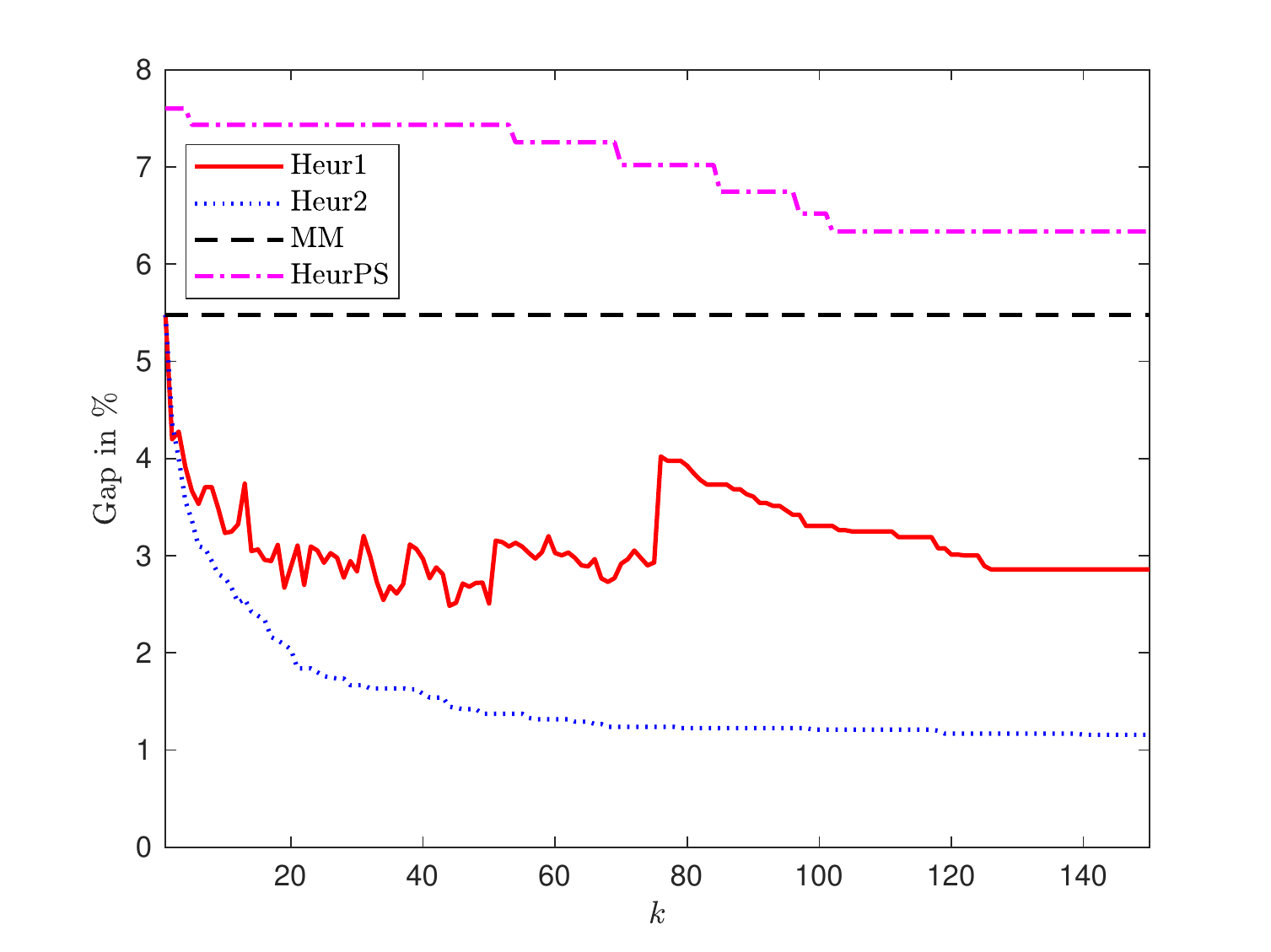}
\caption{Average percental gaps between objective value of the heuristic solutions and the MMLB over $10$ instances with $n=150$ and $\Gamma=6$.}
\label{fig:KnapsackLinePlot}
\end{figure}

\subsection{Results on shortest path problems}

In this section we consider the classical shortest path problem. The results regarding the RCG and the MMLB are shown in Table~\ref{tbl:LB_ShortestPath}. 
The results for all three heuristics and the min-max solution are shown in Table~\ref{tbl:Heur_ShortestPath}. 
Each row shows the average over all  $100$ shortest path instances.

\begin{table}[h]
\centering
\resizebox{0.9\textwidth}{!}
{
\begin{tabular}{rrr|r|rrr|rrrr}
&&&&\multicolumn{3}{c}{MMLB}&\multicolumn{4}{|c}{RCG}\\
\hline
$n$ & $\Gamma$ & $K$ & Gap (\%) & $t$ & $\#solved$ & $\#iter$ & $t$ & $\#solved$ & $\#iter$ & Opt-Gap (\%) \\ 
\hline 
57&3&10&5.4&0.0&100&7.3&3279.9&14&4.8&9.7\\
57&3&20&5.4&0.0&100&7.3&3428.3&10&6.0&9.8\\
57&3&30&5.8&0.0&100&7.3&3360.7&14&5.0&11.0\\
57&6&10&8.4&0.1&100&12.0&3453.1&9&5.0&17.1\\
57&6&20&8.2&0.1&100&12.0&3535.7&7&5.4&16.2\\
57&6&30&9.0&0.1&100&12.0&3572.3&6&5.2&17.7\\
90&3&10&6.4&0.1&100&8.8&3507.4&4&4.3&13.5\\
90&3&20&6.8&0.1&100&8.8&3566.8&2&3.6&15.4\\
90&3&30&8.5&0.1&100&8.8&3567.4&1&3.1&18.8\\
90&6&10&10.4&0.2&100&15.0&3568.9&1&3.9&24.9\\
90&6&20&11.5&0.2&100&15.0&3600.0&0&3.5&26.7\\
90&6&30&12.6&0.2&100&15.0&3600.0&0&3.0&30.0\\
131&3&10&6.6&0.1&100&10.1&3600.0&0&4.1&16.6\\
131&3&20&7.4&0.1&100&10.1&3600.0&0&3.6&18.5\\
131&3&30&8.8&0.1&100&10.1&3600.0&0&3.0&22.5\\
131&6&10&11.4&0.3&100&18.3&3600.0&0&3.6&30.9\\
131&6&20&12.5&0.3&100&18.3&3600.0&0&3.3&32.1\\
131&6&30&13.5&0.3&100&18.3&3600.0&0&2.9&34.2 \\
\hline
179&3&-&-&0.1&100&12.0 \\
179&6&-&-&0.6&100&22.9 \\
234&3&-&-&0.2&100&11.8 \\
234&6&-&-&0.6&100&23.7 \\
297&3&-&-&0.2&100&12.5 \\
297&6&-&-&0.9&100&28.2 \\
368&3&-&-&0.3&100&13.7 \\
368&6&-&-&1.2&100&32.3 \\
\end{tabular}
}
\caption{Results of MMLB and RCG for the shortest path problem.}
\label{tbl:LB_ShortestPath}
\end{table}

% \begin{table}[h]
% \centering
% \resizebox{0.4\textwidth}{!}
% {
% \begin{tabular}{rrr|rrr}
% &&&\multicolumn{3}{c}{MMLB}\\
% \hline
% $n$ & $\Gamma$ & $K$ & $t$ & $\#solved$ & $\#iter$ \\ 
% \hline 
% 179&3&10&0.1&100&12.0 \\
% 179&3&20&0.1&100&12.0 \\
% 179&3&30&0.1&100&12.0 \\
% 179&6&10&0.6&100&22.9 \\
% 179&6&20&0.6&100&22.9 \\
% 179&6&30&0.6&100&22.9 \\
% 234&3&10&0.2&100&11.8 \\
% 234&3&20&0.2&100&11.8 \\
% 234&3&30&0.2&100&11.8 \\
% 234&6&10&0.6&100&23.7 \\
% 234&6&20&0.6&100&23.7 \\
% 234&6&30&0.6&100&23.7 \\
% 297&3&10&0.2&100&12.5 \\
% 297&3&20&0.2&100&12.5 \\
% 297&3&30&0.2&100&12.5 \\
% 297&6&10&0.9&100&28.2 \\
% 297&6&20&0.9&100&28.2 \\
% 297&6&30&0.9&100&28.2 \\
% 368&3&10&0.3&100&13.7 \\
% 368&3&20&0.3&100&13.7 \\
% 368&3&30&0.3&100&13.7 \\
% 368&6&10&1.2&100&32.3 \\
% 368&6&20&1.2&100&32.3 \\
% 368&6&30&1.2&100&32.3
% \end{tabular}
% }
% \caption{Results of MMLB for the shortest path problem.}
% \label{tbl:LB_ShortestPath2}
% \end{table}

% In Tables \ref{tbl:LB_ShortestPath} and \ref{tbl:LB_ShortestPath2} we show the results for the RCG and the MMLB. 
Even for a dimension of $n=131$ the RCG hit the timelimit of $1$ hour in every instance. For most configurations it could solve at most $25$ of $100$ instances during the timelimit. Furthermore the lower bound given by the MMLB is larger than the lower bound of the RCG after $1$ hour. For small instances it is at least $5\%$ better while for the larger instances the gap increases up to $12\%$ for some instances. The optimality gap of the RCG after $1$ hour is still at least $9\%$, for larger instances even around $30\%$. Due to this observation and the time consuming calculations of the RCG we did not test the RCG for larger instances as the bounds provided by MMLB are tighter and the hardest of them could be computed in at most $1.2$ seconds. This is due to the very small number of iterations and the small computational effort of the shortest path problem in its deterministic version. 

\begin{table}[h]
\centering
\resizebox{0.9\textwidth}{!}
{
\begin{tabular}{rrr|rrrr}
$n$ & $\Gamma$ & $K$ & Heur1 & Heur2 & HeurPS & MM \\ 
\hline 
57&3&10&5.1&1.7&8.1&15.3\\
57&3&20&5.6&0.5&6.1&15.3\\
57&3&30&7.4&0.3&5.5&15.3\\
57&6&10&8.2&3.9&8.7&17.4\\
57&6&20&7.9&2.1&6.5&17.4\\
57&6&30&9.1&1.7&5.8&17.4\\
90&3&10&5.1&2.3&11.8&18.0\\
90&3&20&5.1&0.9&9.5&18.0\\
90&3&30&4.6&0.5&8.6&18.0\\
90&6&10&9.5&5.7&14.1&22.6\\
90&6&20&8.6&3.3&11.2&22.6\\
90&6&30&8.0&2.4&10.2&22.6\\
131&3&10&6.4&3.4&15.7&19.9\\
131&3&20&5.8&1.7&13.6&19.9\\
131&3&30&5.7&0.9&13.0&19.9\\
131&6&10&10.9&7.5&20.4&26.1\\
131&6&20&8.9&4.8&18.0&26.1\\
131&6&30&9.4&3.6&16.5&26.1
\end{tabular}
\quad
\begin{tabular}{rrr|rrrr}
$n$ & $\Gamma$ & $K$ & Heur1 & Heur2 & HeurPS & MM \\ 
\hline 
179&3&10&7.3&5.2&20.2&22.4\\
179&3&20&7.1&3.2&17.7&22.4\\
179&3&30&7.9&2.2&15.8&22.4\\
179&6&10&11.4&8.9&23.1&29.6\\
179&6&20&10.5&5.8&20.2&29.6\\
179&6&30&10.7&4.7&18.7&29.6\\
234&3&10&7.3&5.7&22.5&22.7\\
234&3&20&6.8&3.3&20.1&22.7\\
234&3&30&6.9&2.1&19.1&22.7\\
234&6&10&11.7&9.8&27.5&31.0\\
234&6&20&11.0&6.5&25.0&31.0\\
234&6&30&10.0&5.1&23.5&31.0\\
297&3&10&7.7&6.5&24.1&22.9\\
297&3&20&7.3&4.5&20.9&22.9\\
297&3&30&7.7&3.1&20.0&22.9\\
297&6&10&12.3&10.4&29.9&32.3\\
297&6&20&10.9&7.5&28.1&32.3\\
297&6&30&10.6&5.9&26.8&32.3\\
368&3&10&7.5&7.3&25.5&23.3\\
368&3&20&6.9&5.2&23.7&23.3\\
368&3&30&6.7&3.5&22.8&23.3\\
368&6&10&13.3&12.5&32.5&33.7\\
368&6&20&11.1&9.1&30.9&33.7\\
368&6&30&10.8&7.5&29.7&33.7
\end{tabular}
}
\caption{Percental Gaps of Heur1, Heur2, HeurPS and MM to the lower bound MMLB for the shortest path problem.}
\label{tbl:Heur_ShortestPath}
\end{table}
In Table \ref{tbl:Heur_ShortestPath} we show the results for all three heuristics and the min-max solution. Heur2 outperforms all other heuristics for all configurations. Compared to the knapsack problem the gaps are slightly larger for higher dimensions but are never larger than $12\%$. The gaps of Heur1 are also larger than for the knapsack problem, at most $13.3\%$, but always larger than the gaps of Heur2. In contrast to the knapsack problem here the min-max solution provides the worst gaps in nearly all instances. The gaps of HeurPS are slightly better but can also increase up to $30\%$ for larger instances. To summarize Heur1 and Heur2 seem to be a good choice to solve Problem \eqref{eq:minmaxmin} for the shortest path problem.

In Figure \ref{fig:ShortestPathLinePlot} we show a line plot of the same average gaps as in Table \ref{tbl:Heur_ShortestPath} over $100$ instances with $n=179$ and $\Gamma=6$ for all $K\in\left\{ 1,\ldots ,n\right\}$. Again, Heur2 outperforms the other heuristics. Heur1 returns solutions which are significantly better than the min-max solutions as well. 
% Unfortunately due to the number of items $t:=\lfloor \frac{n}{K}\rfloor$ considered in each of the $K-1$ subsets in the partition constructed in Heur1, the size of the last set in the partition varies a lot depending on $K$. This explains the fluctuating gaps of Heur1. The gaps of Heur2 seem to be much more stable. 
In contrast to the knapsack problem the HeurPS performs better than the min-max solution here.
\begin{figure}[h]
\centering
\includegraphics[scale=0.7]{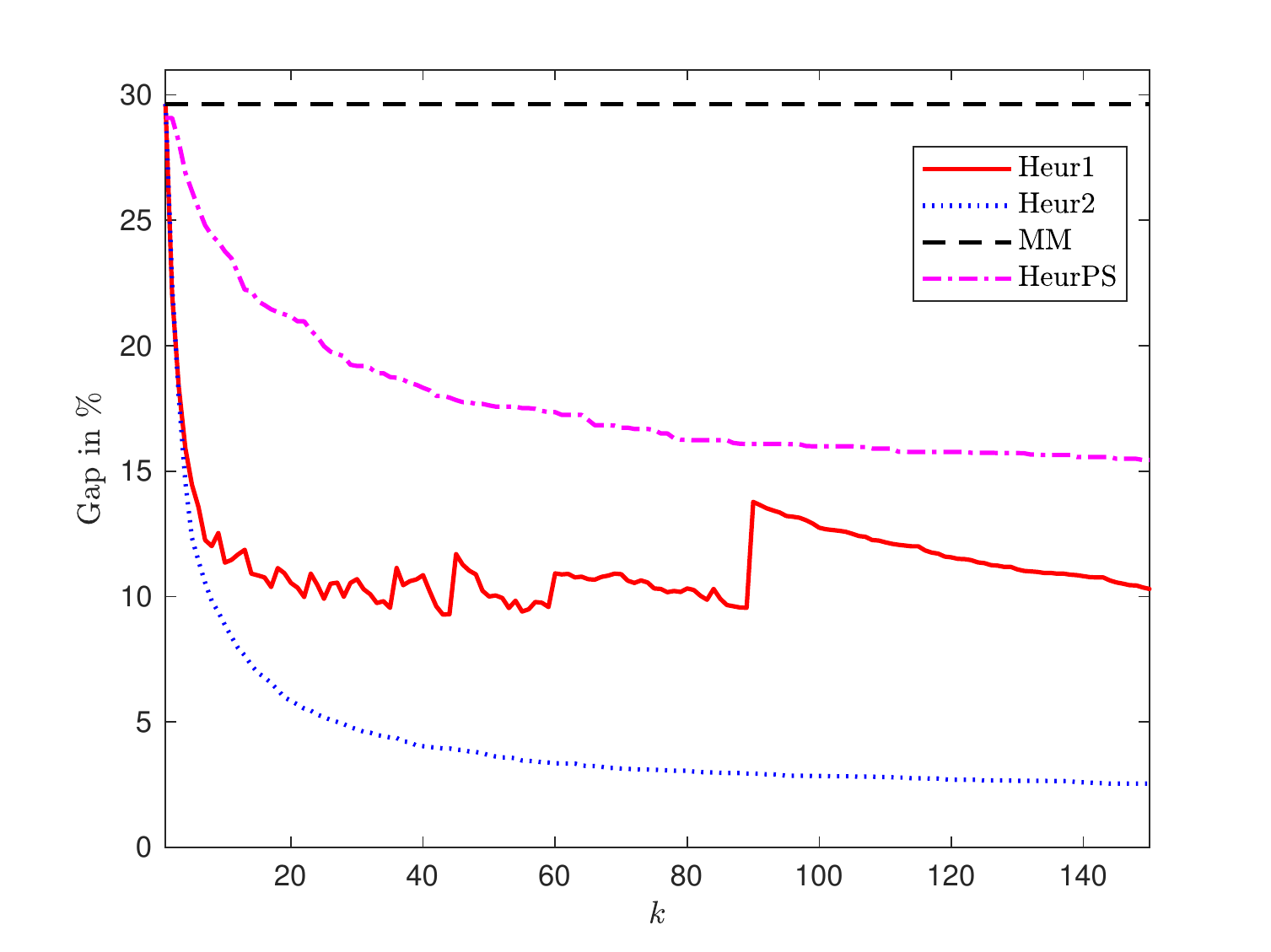}
\caption{Average percental gaps between objective value of the heuristic solutions and the MMLB over $100$ instances with $n=179$ and $\Gamma=6$.}
\label{fig:ShortestPathLinePlot}
\end{figure} 

\section{Conclusion}
\label{sec:conclusion}

We considered the min-max-min problem in robust combinatorial optimization, where it is possible to prepare $K$ solutions beforehand. Once the uncertain costs are revealed, one then chooses the best of the prepared solutions for this scenario. For the first time, the min-max-min setting is considered in combination with discrete budgeted uncertainty.

Our complexity analysis reveals that most combinatorial problems become $\NP$-hard in this setting, and even inapproximable. Furthermore, even evaluating the objective value of a $K$-tuple of solutions is already $\NP$-hard, making it unlikely that a compact problem formulation exists. We thus present a row-and-column generation approach to find exact solutions. As this approach fails for larger problem instances, we also develop two heuristic algorithms that run in polynomial time. Computational experiments indicate that these heuristics scale well with the problem size, leading to solutions in seconds that leave a gap of a few percent for large instances when compared to a simple lower bound.

\section*{References}

\bibliographystyle{alpha}
\bibliography{references}

\newcommand{\etalchar}[1]{$^{#1}$}
\begin{thebibliography}{ANSdG12}

\bibitem[ABV09]{aissi_minmax_survey}
Hassene Aissi, Cristina Bazgan, and Daniel Vanderpooten.
\newblock Min-max and min-max regret versions of combinatorial optimization
  problems: A survey.
\newblock {\em European Journal of Operational Research}, 197(2):427--438,
  2009.

\bibitem[ANSdG12]{alumur2012hub}
Sibel~A Alumur, Stefan Nickel, and Francisco Saldanha-da Gama.
\newblock Hub location under uncertainty.
\newblock {\em Transportation Research Part B: Methodological}, 46(4):529--543,
  2012.

\bibitem[BDPZ18]{bertsimas2018robust}
Dimitris Bertsimas, Jack Dunn, Colin Pawlowski, and Ying~Daisy Zhuo.
\newblock Robust classification.
\newblock {\em INFORMS Journal on Optimization}, 1(1):2--34, 2018.

\bibitem[BK17]{buchheim2016min}
Christoph Buchheim and Jannis Kurtz.
\newblock Min--max--min robust combinatorial optimization.
\newblock {\em Mathematical Programming}, 163(1):1--23, 2017.

\bibitem[BK18a]{JannisDiscrete}
Christoph Buchheim and Jannis Kurtz.
\newblock Complexity of min--max--min robustness for combinatorial optimization
  under discrete uncertainty.
\newblock {\em Discrete Optimization}, 28:1--15, 2018.

\bibitem[BK18b]{buchheimrobust}
Christoph Buchheim and Jannis Kurtz.
\newblock Robust combinatorial optimization under convex and discrete cost
  uncertainty.
\newblock {\em EURO Journal on Computational Optimization}, 6(3):211--238,
  2018.

\bibitem[BS03]{bertsimas2003combinatorial}
Dimitris Bertsimas and Melvyn Sim.
\newblock Robust discrete optimization and network flows.
\newblock {\em Math. Program.}, 98(1-3):49--71, 2003.

\bibitem[CGKP19]{ChasseinGKP19}
Andr{\'{e}}~B. Chassein, Marc Goerigk, Jannis Kurtz, and Michael Poss.
\newblock Faster algorithms for min-max-min robustness for combinatorial
  problems with budgeted uncertainty.
\newblock {\em European Journal of Operational Research}, 279(2):308--319,
  2019.

\bibitem[CGKZ18]{chassein2018recoverable}
Andr{\'e} Chassein, Marc Goerigk, Adam Kasperski, and Pawe{\l} Zieli{\'n}ski.
\newblock On recoverable and two-stage robust selection problems with budgeted
  uncertainty.
\newblock {\em European Journal of Operational Research}, 265(2):423--436,
  2018.

\bibitem[Cha17]{Chassein2017}
Andr{\'e} Chassein.
\newblock Having a plan {B} for robust optimization.
\newblock Technical report, Technische Universit\"at Kaiserslautern, 2017.

\bibitem[CTC07]{chang2007scenario}
Mei-Shiang Chang, Ya-Ling Tseng, and Jing-Wen Chen.
\newblock A scenario planning approach for the flood emergency logistics
  preparation problem under uncertainty.
\newblock {\em Transportation Research Part E: Logistics and Transportation
  Review}, 43(6):737--754, 2007.

\bibitem[EKBC19]{eufinger2017robust}
Lars Eufinger, Jannis Kurtz, Christoph Buchheim, and Uwe Clausen.
\newblock A robust approach to the capacitated vehicle routing problem with
  uncertain costs.
\newblock {\em INFORMS Journal on Optimization}, 2019.
\newblock To appear.

\bibitem[GJ90]{gareyjohnson}
Michael~R. Garey and David~S. Johnson.
\newblock {\em Computers and Intractability; A Guide to the Theory of
  NP-Completeness}.
\newblock W. H. Freeman \& Co., New York, NY, USA, 1990.

\bibitem[HKW15]{hanasusanto2015k}
Grani~A Hanasusanto, Daniel Kuhn, and Wolfram Wiesemann.
\newblock K-adaptability in two-stage robust binary programming.
\newblock {\em Operations Research}, 63(4):877--891, 2015.

\bibitem[HS76]{Horowitz:1976}
E.~Horowitz and S.~Sahni.
\newblock Exact and approximate algorithms for scheduling nonidentical
  processors.
\newblock {\em J. ACM}, 23(2):317--327, 1976.

\bibitem[IPS82]{ItaiPS82}
Alon Itai, Yehoshua Perl, and Yossi Shiloach.
\newblock The complexity of finding maximum disjoint paths with length
  constraints.
\newblock {\em Networks}, 12(3):277--286, 1982.

\bibitem[KPP04]{kellerer2003knapsack}
Hans Kellerer, Ulrich Pferschy, and David Pisinger.
\newblock {\em Knapsack problems}.
\newblock Springer, Berlin, 2004.

\bibitem[KZ16]{kasperski2016robust}
Adam Kasperski and Pawe{\l} Zieli{\'n}ski.
\newblock Robust discrete optimization under discrete and interval uncertainty:
  A survey.
\newblock In {\em Robustness analysis in decision aiding, optimization, and
  analytics}, pages 113--143. Springer, 2016.

\bibitem[LMSL90]{li1990complexity}
Chung-Lun Li, S~Thomas McCormick, and David Simchi-Levi.
\newblock The complexity of finding two disjoint paths with min-max objective
  function.
\newblock {\em Discrete Appl. Math.}, 26(1):105--115, 1990.

\bibitem[LPdB{\etalchar{+}}13]{liberatore2013uncertainty}
F~Liberatore, C~Pizarro, C~Sim{\'o}n de~Blas, MT~Ortu{\~n}o, and B~Vitoriano.
\newblock Uncertainty in humanitarian logistics for disaster management. a
  review.
\newblock In {\em Decision aid models for disaster management and emergencies},
  pages 45--74. Springer, 2013.

\bibitem[Pos18]{Poss17}
Michael Poss.
\newblock Robust combinatorial optimization with knapsack uncertainty.
\newblock {\em Discrete Optimization}, 27:88--102, 2018.

\bibitem[SGW17]{subramanyam2017k}
Anirudh Subramanyam, Chrysanthos~E Gounaris, and Wolfram Wiesemann.
\newblock K-adaptability in two-stage mixed-integer robust optimization.
\newblock {\em arXiv preprint arXiv:1706.07097}, 2017.

\bibitem[ZZ13]{zeng2013solving}
Bo~Zeng and Long Zhao.
\newblock Solving two-stage robust optimization problems using a
  column-and-constraint generation method.
\newblock {\em Operations Research Letters}, 41(5):457--461, 2013.

\end{thebibliography}

\newpage
\appendix

\section{Additional proofs}
\label{app:proofs}

\begin{theorem}
Problem \eqref{eq:minmaxmin} for the spanning tree problem is weakly $\NPhard$, 
even if $K=2$.
It is strongly $\NPhard$ if $K$ is part of the input.
\end{theorem}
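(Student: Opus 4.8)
The plan is to reduce the partition problem (for $K=2$) and the $3$-partition problem (when $K$ is part of the input) to the spanning tree version of~\eqref{eq:minmaxmin}, reusing the weight construction from the proof of Theorem~\ref{thm:uncons}. The only additional ingredient needed is a graph whose spanning trees are in bijection with $\{0,1\}^n$, so that the spanning tree instance behaves exactly like the unconstrained binary instance.

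For $K=2$, given integers $a_1,\dots,a_n\in\N$, I take the multigraph $G$ on nodes $\{v_0,\dots,v_n\}$ with two parallel edges $e_i,f_i$ joining $v_{i-1}$ and $v_i$ for each $i\in[n]$. Since the cycle space of $G$ is generated by the $n$ digons, every spanning tree of $G$ contains exactly one edge of each pair $\{e_i,f_i\}$, and conversely every such selection is a spanning tree; spanning trees thus correspond to $\pmb x\in\{0,1\}^n$ via $x_i=1\iff e_i\in T$. I set $\Gamma=1$, $\hat c_{e_i}=-a_i$, $d_{e_i}=M:=\sum_j a_j$, and $\hat c_{f_i}=d_{f_i}=0$ (shifting both entries of a pair by $a_i$ removes the negative costs if this is undesirable). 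Because $d_{f_i}=0$, no scenario gains from deviating an $f_i$-edge, so the inner maximization effectively ranges over $\sum_i\delta_i\le\Gamma$ on the $e_i$-edges only, and the resulting instance of~\eqref{eq:minmaxmin} coincides with the min-max-min unconstrained binary problem on $a_1,\dots,a_n$. By the argument of Theorem~\ref{thm:uncons}, its optimal value equals $-\tfrac12 M$ iff the partition instance is a yes-instance, which establishes the (weak) $\NPhard$ness; a labeling dynamic program along the chain, in the spirit of Algorithm~\ref{algo:dpk2}, is also available here.

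For $K$ part of the input I keep the gadget (now with $3m$ pairs), start from a $3$-partition instance $a_1,\dots,a_{3m},B$, and set $K=m$, $\Gamma=K-1$ with the weights above. Exactly as in the strong-hardness part of Theorem~\ref{thm:uncons}, one argues that it is without loss of generality to restrict attention to $K$-tuples for which the sets $A_k=\{i:x^{(k)}_i=1\}$ partition $[3m]$; for such a tuple an adversary with $\Gamma=K-1$ deviations cannot raise all $K$ trees, so the objective equals $-\min_k\sum_{i\in A_k}a_i$, which by averaging is $\ge -B$, with equality attained by some $K$-tuple iff a partition of $[3m]$ into $m$ parts of equal sum exists; finally the $3$-partition bounds $B/4<a_i<B/2$ force each such part to have exactly three elements. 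Hence the spanning tree instance of~\eqref{eq:minmaxmin} has optimal value $-B$ (value $\le B$ with the shifted, nonnegative costs) iff the $3$-partition instance is a yes-instance, which gives strong $\NPhard$ness.

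The routine points to check are the spanning-tree/$\{0,1\}^n$ bijection for the gadget, the reduction ``deviating only the $e_i$-edges is w.l.o.g.'' (because $d_{f_i}=0$), and the combinatorial identities already worked out for the unconstrained problem in the proof of Theorem~\ref{thm:uncons}. I expect the only genuine design step — and hence the thing to get right — is the choice of gadget graph: it must be a legitimate spanning tree instance whose matroid is the partition matroid $U_{1,2}^{\oplus n}$, which forces parallel edges in the most natural realization. If a \emph{simple} graph is wanted, one replaces each digon by the triangle on $\{v_{i-1},u_i,v_i\}$ with edges $f_i=(v_{i-1},v_i)$, $g_i=(v_{i-1},u_i)$, $h_i=(u_i,v_i)$: a spanning tree of the resulting ``triangle chain'' discards exactly one edge of each triangle, and putting $\hat c_{g_i}=-a_i$, $d_{g_i}=M$ and zero data on $f_i,h_i$ makes the cost of every tree depend only on whether $g_i$ is kept, recovering the same equivalence at the price of a slightly less transparent cost assignment.
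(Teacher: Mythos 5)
Your reduction is correct, and it follows the same overall strategy as the paper's appendix proof --- reduce from partition (resp.\ from $3$-partition with $\Gamma=K-1$) and use the huge deviation $M$ to force the prepared trees to be disjoint on the cost-carrying edges --- but the gadget is genuinely different. The paper builds a ladder graph (for $K$ part of the input, a stack of $K$ paths joined by zero-cost rungs) in which every relevant spanning tree must take exactly one edge of each pair $\{e_i,f_i\}$, so the prepared trees are complementary on the pairs and the objective value directly encodes the larger side of the partition. You instead choose a graph (a chain of digons, or the triangle chain if a simple graph is required) whose spanning trees are in bijection with $\{0,1\}^n$ up to zero-cost, zero-deviation edges, and transplant the negative-cost instance of Theorem~\ref{thm:uncons} verbatim, so that both the $K=2$ and the $K$-as-input arguments are inherited from that proof rather than re-derived. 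What your route buys is modularity: beyond the spanning-tree/subset bijection and the observation that deviating a zero-deviation edge is useless (both of which I checked and which do hold, also for the triangle-chain variant), no spanning-tree-specific reasoning is needed, and the same embedding would work for any feasibility structure containing such a ``free'' gadget; what the paper's gadget buys is that the partition is encoded without negative nominal costs and without any shifting discussion. One small slip, inconsequential for correctness: your parenthetical claim that after shifting each pair by $a_i$ the $K$-input threshold becomes ``value $\le B$'' is off --- with the shift a tree with index set $A_k$ costs $mB-\sum_{i\in A_k}a_i$, so the threshold would be $(m-1)B$; since your main argument works with the unshifted negative costs and threshold $-B$ (where overlapping tuples have value $\ge 0$ and disjoint tuples have value $-\min_k\sum_{i\in A_k}a_i$, so value $\le -B$ forces an exact cover by triples), this aside can simply be dropped.
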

\begin{proof}
Assume first that $\Gamma=1$ and $K=2$. We reduce the $2$-partition problem to \eqref{eq:minmaxmin} for the spanning tree problem. Given an instance of the $2$-partition problem i.e. $a_i\in\N$ for each $i\in N=[n]$ we define the graph $G=(V,E)$ as follows: Let $V=\left\{ v_1,\ldots ,v_{n+1}, w_1,\ldots , w_{n+1}\right\}$ and $E=\left\{e_1,\ldots ,e_n, f_1, \ldots ,f_n , g_1,\ldots , g_{n+1} \right\}$ where $e_i = \left\{ v_i, v_{i+1}\right\}$, $f_i = \left\{ w_i, w_{i+1}\right\}$ and $g_i = \left\{ v_i, w_i\right\}$. Assume $e_i$ and $f_i$ have nominal costs $a_i$ and a deviation of $M$ where $M=\sum_{i \in [n]} a_i$. All edges $g_i$ have costs and deviation $0$. Note that all optimal spanning trees of the latter graph must use all edges $g_i$ and for each $i\in N$ exactly one of the edges $e_i$ or $f_i$. Now any optimal solution of \eqref{eq:minmaxmin} for $K=2$ contains two trees which are disjoint on the $e$-edges and on the $f$-edges since otherwise the deviation on a common edge could be set to $M$. Thus we can find a solution $I\subseteq N$ for the $2$-partition problem if and only if the optimal value of \eqref{eq:minmaxmin} is $\frac{1}{2}M$.

The proof extends to the case when $K$ is part of the input by setting $\Gamma=K-1$ and constructing a graph $G=(V,E)$ with nodes
\[V=\left\{ v^1_1,\ldots ,v^1_{n+1},\ldots, v^K_1,\ldots ,v^K_{n+1}\right\}\]
and edges
\[E=\left\{e^1_1,\ldots ,e^1_n,\ldots ,e^K_1,\ldots ,e^K_n, g^1_1,\ldots , g^1_{n+1},\ldots ,g^{K-1}_1,\ldots , g^{K-1}_{n+1} \right\}\]
where $e^j_i = \left\{ v^j_i, v^j_{i+1}\right\}$ and $g^j_i = \left\{ v^j_i, v^{j+1}_i\right\}$. Again all edges $e^j_i$ have nominal costs $a_i$ and a deviation of $M$ where $M=\sum_{i \in [n]} a_i$. All edges $g^j_i$ have costs and deviation $0$. A similar reasoning as above shows that one can decide if a partition into $K$ sets  having the same costs exists if and only if the corresponding min-max-min problem has an optimal value equal to $\frac{1}{K}M$. The problem generalizes the 3-partition problem, which is $\NPhard$ in the strong sense, which proves the result.
\end{proof}

% \section{$\NPhardness$ of the assignment problem}
% \label{app:assp}

\begin{theorem}
Problem \eqref{eq:minmaxmin} for the assignment problem is weakly $\NPhard$,
even if $K=2$.
It is strongly $\NPhard$ if $K$ is part of the input.
\end{theorem}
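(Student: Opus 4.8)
The plan is to reduce the \emph{partition} problem (when $K=2$) and the \emph{3-partition} problem (when $K$ is part of the input) to Problem~\eqref{eq:minmaxmin} for the assignment problem, using one common family of gadgets, in the same spirit as the proof of Theorem~\ref{thm:uncons}. Throughout I would set $\Gamma=K-1$.

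Given positive integers $a_1,\dots,a_n$ with $M:=\sum_{i\in[n]}a_i$, I would build a bipartite graph with left vertices $\{\ell_i,m_i : i\in[n]\}$, right vertices $\{r_i,s_i : i\in[n]\}$, and, for each $i$, the four edges $\ell_i r_i,\ \ell_i s_i,\ m_i r_i,\ m_i s_i$; the edge $\ell_i r_i$ has nominal cost $-a_i$ and deviation $M$, while every other edge has zero nominal cost and zero deviation. Restricted to the $i$-th gadget, a perfect matching is either $\{\ell_i r_i, m_i s_i\}$ (``$i$ in'', cost $-a_i$, using the unique uncertain edge of the gadget) or $\{\ell_i s_i, m_i r_i\}$ (``$i$ out'', cost $0$). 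Hence perfect matchings correspond bijectively to subsets $S\subseteq[n]$, the matching for $S$ having nominal cost $-\sum_{i\in S}a_i$, and deviating on a single edge $\ell_i r_i$ raises by $M$ the cost of exactly those prepared matchings whose set contains $i$; this mirrors the role played by the coordinates of $\{0,1\}^n$ in Theorem~\ref{thm:uncons}.

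For $K=2$ I would argue that in an optimal pair the sets $S_1,S_2$ partition $[n]$: if $S_1\cap S_2\ni i$, deviating on $\ell_i r_i$ forces both costs to be $\ge -M+M=0$, which is worse than the value of any genuine partition; and for disjoint $S_1,S_2$ the adversary's best response with $\Gamma=1$ gives value $-\min\!\big(\sum_{i\in S_1}a_i,\ \sum_{i\in S_2}a_i\big)\ge -\tfrac12 M$, with equality exactly when $\sum_{i\in S_1}a_i=\sum_{i\in S_2}a_i$ and $S_1\cup S_2=[n]$. Thus the optimal value equals $-\tfrac12 M$ if and only if the partition instance is a yes-instance, which shows that Problem~\eqref{eq:minmaxmin} for the assignment problem is (weakly) $\NP$-hard already for $K=2$.

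When $K$ is part of the input, the same construction with $n=3m$, $K=m$, and data taken from a 3-partition instance ($c_1,\dots,c_{3m}$ with $\tfrac B4<c_i<\tfrac B2$ and $\sum_i c_i=mB$) should have optimal value exactly $-B$ iff a valid 3-partition exists. Given $A_1,\dots,A_m$ with $\sum_{i\in A_k}c_i=B$, the corresponding matchings leave the adversary, with budget $m-1$, able to deviate one uncertain edge from each of only $m-1$ of the pairwise disjoint sets, so one matching keeps cost $-B$ and the value is $-B$. Conversely, if the $S_k$ are pairwise disjoint then the value equals $-\min_k\sum_{i\in S_k}c_i\ge -B$, and equality forces every $S_k$ to have weight $B$, i.e.\ a 3-partition; if instead some index is shared by two matchings, the family $\{S_k\}$ admits a transversal of size at most $m-1$, so the adversary can lift \emph{all} $m$ matchings to cost $\ge 0>-B$. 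Since 3-partition is strongly $\NP$-hard and all numbers introduced are polynomial in its (unary) encoding, this yields strong $\NP$-hardness. The main difficulty is exactly this last bookkeeping: checking across all configurations (shared indices, unused indices, empty $S_k$) that the adversary's optimal response never pushes the objective below $-B$ and attains $-B$ only at a genuine $K$-partition; the rest of the argument is routine and parallels Theorem~\ref{thm:uncons}.
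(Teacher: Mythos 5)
Your proposal is correct and follows essentially the same route as the paper's proof: a reduction from partition (for $K=2$) and from 3-partition with $\Gamma=K-1$ (for $K$ part of the input), using a huge deviation $M$ on the ``selection'' edges so that optimal solutions must be disjoint on those edges and the threshold values $-\tfrac12 M$ resp.\ $-B$ characterize yes-instances. The only difference is cosmetic: you encode subsets via $n$ disjoint four-cycle gadgets (giving a clean bijection between perfect matchings and subsets of $[n]$), whereas the paper puts the uncertain edges $\{v_i,w_i\}$ with costs $-a_i$ into a complete bipartite graph; the final bookkeeping you flag (shared indices, empty sets) does go through as you outline.
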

\begin{proof}
Assume first that $\Gamma=1$ and $K=2$. We reduce the $2$-partition problem to \eqref{eq:minmaxmin} for the assignment problem. Given an instance of the $2$-partition problem i.e. $a_i\in\N$ for each $i\in N=[n]$ we want to know if there exists a subset $I\subseteq N$ with $|I| = |N\setminus I|$ such that $\sum_{i\in I} a_i = \sum_{i\in N\setminus I} a_i$. We consider a graph $G=(V,E)$ with nodes $V=\left\{ v_1,\ldots ,v_{n}, w_1,\ldots , w_{n}\right\}$ and edges $E=\left\{\left\{v_i,w_j\right\}: i,j=1,\ldots ,n \right\}$. The edges $\left\{v_i,w_i\right\}$ have nominal costs $-a_i$ and deviation $M=\sum_{i \in [n]} a_i$ for each $i=1,\ldots ,n$. All other edges have costs and deviation $0$. By the choice of $M$ the two solutions in an optimal solution of \eqref{eq:minmaxmin} are disjoint and each edge $\left\{v_i,w_i\right\}$ is used by at least one of the two solutions. Thus we can find a solution $I\subseteq N$ for the $2$-partition problem with $|I| = |N\setminus I|$ if and only if the optimal value of \eqref{eq:minmaxmin} is $-\frac{1}{2}M$. 

The proof extends to the case when $K$ is part of the input by the same construction and $\Gamma=K-1$. A similar reasoning as above shows that one can decide if a partition into $K$ sets  having the same costs exists if and only if the corresponding min-max-min problem has an optimal value equal to $\frac{1}{K}M$. The problem generalizes the 3-partition problem, which is $\NPhard$ in the strong sense, which proves the result.

% Note that for any fixed $k$ by the same construction with $\Gamma=k-1$ we can reduce the $k$-partition problem to \eqref{eq:minmaxmin}. % (Is that true?).
\end{proof}

\section{Dynamic programming for the knapsack problem with any fixed $K$ and $\Gamma$}
\label{app:DP}

Consider first $\Gamma\ge 2$ and $K=2$. As before, the algorithm enumerates labels $\pmb{s}$ and chooses the best of them by computing their costs. As $K=2$, only two solutions are being built, and steps~\ref{step:x1}--\ref{step:x2} follow the same idea as before with one difference: computing the cost \eqref{eq:cost} requires the worst $\Gamma$ deviations for each partial solution. Therefore, every state $s\in\S$ is now described by the $(3\Gamma+4)$-tuple $\pmb{s}=(w^{(1)},w^{(2)},c^{(1)},c^{(2)},\boldi^{(1)},\boldi^{(2)}, \boldi^{(1,2)})$ where $\boldi^{(1)}$, $\boldi^{(2)}$, and $\boldi^{(1,2)}$ are $\Gamma$-tuples recording the indices of the largest elements. Equation~\eqref{eq:cost} becomes
\begin{equation}
\label{eq:generalcost}
cost(\pmb{s}) = \max_{S\subseteq [n] \atop |S| \leq \Gamma}\left\{\min\left(c^{(1)}+\sum_{i\in S\cap (\boldi^{(1)}\cup \boldi^{(1,2)})}d_i,c^{(2)}+\sum_{i\in S\cap (\boldi^{(2)}\cup\boldi^{(1,2)})}d_i\right)\right\}.
\end{equation}

For $K\geq 3$, we are now constructing $K$ solutions, so steps~\ref{step:x1}--\ref{step:x2} should be adapted accordingly. In addition, computing the cost for $K\ge 3$ also requires the $\Gamma$ worst deviations for each subset $\boldj\subseteq[K]$ that contains at most $\Gamma$ elements. We obtain states described by
$$
\pmb{s}=(w^{(1)},\ldots,w^{(K)},c^{(1)},\ldots,c^{(K)},\boldi^{(\boldj_1)},\ldots,\boldi^{(\boldj_{\mathfrak{K}})}),
$$
where 
$$
\mathfrak{K}=\sum_{\gamma=1}^{\min(K,\Gamma)}{K \choose \gamma}, 
$$
which is constant when $K$ and $\Gamma$ are constant. The resulting set $\S$ contains $O(n^{K+\Gamma \mathfrak{K}}\maxc^K)$ many states. The cost function~\eqref{eq:generalcost} can be extended similarly.
\end{document}